\let\oldmarginpar\marginpar
\renewcommand\marginpar[1]{\oldmarginpar[\raggedleft\footnotesize #1]%
{\raggedright\footnotesize #1}}
\renewcommand{\setminus}{{\smallsetminus}}
\newcommand{\ZZ}{{\mathbb{Z}}}
\newcommand{\abs}[1]{{\left\vert #1 \right\vert}}
\def\ba{\begin{array}}
\def\ea{\end{array}}
\def\bp{\begin{pmatrix}}
\def\ep{\end{pmatrix}}
\theoremstyle{plain}
\newtheorem{theorem}{Theorem}[section]
\newtheorem{corollary}[theorem]{Corollary}
\newtheorem{lemma}[theorem]{Lemma}
\newtheorem{prop}[theorem]{Proposition}
\newtheorem*{namedtheorem}{\theoremname}
\newcommand{\theoremname}{testing}
\newenvironment{named}[1]{\renewcommand{\theoremname}{#1}\begin{namedtheorem}}{\end{namedtheorem}}
\theoremstyle{definition}
\newtheorem{define}[theorem]{Definition}
\newtheorem{remark}[theorem]{Remark}
\newtheorem{question}[theorem]{Question}
\begin{document}
\title{Cosmetic crossings and Seifert matrices}
\author[C. Balm]{Cheryl Balm}
\author[S. Friedl]{Stefan Friedl}
\author[E. Kalfagianni]{Efstratia Kalfagianni}
\author[M. Powell]{Mark Powell}
\address[]{Department of Mathematics, Michigan State University,  E. Lansing 48824}
\email[]{balmcher@math.msu.edu}
\address[]{Mathematisches Institut\\ Universit\"at zu K\"oln\\   Germany}
\email[]{sfriedl@gmail.com}

\address[]{Department of Mathematics, Michigan State University, E. Lansing 48824}
\email[]{kalfagia@math.msu.edu}

\address[]{Department of Mathematics, Indiana University, Bloomington, IN 47405}
\email[]{macp@indiana.edu}

\thanks{ C. B. was supported in part by NSF grant DMS--0805942  and by
NSF/RTG grants DMS-0353717 and  DMS-0739208.}
\thanks{E. K. was supported in part by NSF grants DMS--0805942 and DMS-1105843.}

\begin{abstract} We study cosmetic crossings in knots of genus one and obtain
obstructions to such crossings in terms of knot invariants determined by Seifert matrices.
In particular,  we prove
 that for genus one knots the Alexander polynomial and the homology of the double cover branching over the knot
provide obstructions to cosmetic crossings. As an application we prove the nugatory crossing
conjecture for twisted Whitehead doubles of non-cable knots. We also verify
the conjecture
for several families of pretzel knots and all genus one knots with up to 12 crossings.




\end{abstract}

\maketitle



\section{Introduction}\label{sec:intro}
A fundamental open question in knot theory is the question of when a crossing
change
on an oriented knot changes the isotopy class of the knot. A crossing disc for an oriented knot $K\subset S^3$
is an embedded disc $D\subset S^3$
such
that $K$ intersects ${\rm int}(D)$ twice with
zero algebraic intersection number. A crossing change on $K$ can be achieved
by twisting $D$ or equivalently by performing appropriate Dehn surgery of $S^3$
along the crossing circle $\partial D$.
The crossing is called nugatory if and only if
$\partial D$ bounds an embedded disc in the complement of $K$. A non-nugatory crossing on a knot $K$ is called
cosmetic if the oriented knot $K'$ obtained
from $K$ by changing the crossing  is isotopic to $K$.
Clearly, changing a nugatory crossing does not
change the isotopy class of a knot. The nugatory crossing conjecture (Problem 1.58  of  Kirby's list \cite{Kirbylist}) asserts that the converse is true: if a crossing change on a knot $K$ yields a knot
isotopic to $K$ then the crossing is
nugatory. In other words, there are not any knots in $S^3$ that admit cosmetic crossings.

In the case that $K$ is the trivial knot an affirmative answer follows from a result
of Gabai \cite{gabai} and work of Scharlemann and Thompson \cite{st}. The conjecture is also known  to hold for  2-bridge knots by work of Torisu \cite{torisu},
and for fibered knots by work  of Kalfagianni \cite{kalfagianni}.
For knots of braid index three a weaker form of the conjecture, requiring  that the crossing change  happens on a
closed 3-braid diagram, is discussed by Wiley in \cite{3braids}.

In this paper we study cosmetic crossings on genus one knots and we
show  that  the Alexander polynomial and the homology of the double cover branching over the knot
provide obstructions to cosmetic crossings.

\begin{theorem} \label{general} Given  an oriented genus one knot $K$  let  $\Delta_K(t)$ denote the Alexander polynomial of $K$
and let $Y_K$  denote the double
cover of $S^3$ branching over $K$.
Suppose that  $K$ admits a cosmetic crossing.  Then
\begin{enumerate}
\item $K$ is algebraically slice. In particular,
 $\Delta_K(t) \doteq f(t) f(t^{-1})$,
where $f(t)\in \ZZ[t]$ is a linear polynomial.

\item The homology group $H_1(Y_K):=H_1(Y_K, {\ZZ})$ is  a finite cyclic group.
\end{enumerate}
\end{theorem}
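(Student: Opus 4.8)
The plan is to transfer the problem to the Seifert form of a genus one Seifert surface carrying the crossing circle, and to read off both conclusions from the shape of the resulting $2\times 2$ Seifert matrix. Let $L=\partial D$ be the crossing circle and $D$ the crossing disc, so that $D$ meets $K$ transversely in two points of opposite sign and $\mathrm{lk}(L,K)=0$; since the crossing change along $D$ produces $K'\cong K$, the knot $K'$ also has genus one. We may assume $K$ is nontrivial, the unknotted case being known (Gabai; Scharlemann--Thompson). The first, and hardest, step is to isotope $L$ in $S^3\setminus K$ onto a genus one Seifert surface $F$ for $K$: this is where sutured manifold theory in the spirit of Gabai enters, applied to the exterior of $K\cup L$ and using that twisting along $D$ preserves the genus of $K$. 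Once $L$ lies on $F$, note that if $[L]=0$ in $H_1(F;\ZZ)$ the crossing turns out to be nugatory; hence $[L]$ is primitive in $H_1(F;\ZZ)\cong\ZZ^2$, and we complete it to a symplectic basis $\{x=[L],\,y\}$ with $x\cdot y=\pm1$. Write the Seifert matrix as $V=\left(\begin{smallmatrix}a&b\\ c&d\end{smallmatrix}\right)$ in this basis; then $b-c=x\cdot y=\pm 1$, so $\gcd(b,c)=1$ and $b+c$ is odd.

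For part (1), the point is that $a=\mathrm{lk}(L,L^+)=0$, where $L^+$ is the pushoff of $L$ into $F$. Indeed $a$ is the signed number of intersections of $L^+$ with $D$. After an innermost disc and arc cleanup of $D\cap F$---here one uses that $K$ is nontrivial, so $F$ stays incompressible and no genus reduction is forced---one arranges $D\cap F=L\sqcup\alpha$, with $\alpha$ a single arc joining the two points of $D\cap K$ and disjoint from $L$ on $F$. Then $L^+$ meets $\mathrm{int}(D)$ only along $\alpha$, and the signed count of these intersections is the value of the intersection pairing $H_1(F)\times H_1(F,\partial F)\to\ZZ$ on $([L],[\alpha])$, which vanishes because $L$ and $\alpha$ admit disjoint representatives. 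So $a=0$; then $\langle x\rangle$ is a metabolizer of the Seifert form, $K$ is algebraically slice, and
\[
\Delta_K(t)\ \doteq\ \det(V-tV^T)\ =\ -(b-ct)(c-bt)\ \doteq\ f(t)f(t^{-1}),\qquad f(t)=ct-b,
\]
which is linear since $b-c=\pm1$.

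For part (2), I would carry the analysis one step further. The twist along $D$ carries $F$ to a genus one Seifert surface of $K'\cong K$, and bookkeeping of linking numbers (using $a=0$ and $[L]\cdot[\alpha]=0$) shows that its Seifert matrix in the corresponding basis is $V'=\left(\begin{smallmatrix}0&b\\ c&d'\end{smallmatrix}\right)$, differing from $V$ by an elementary change localized in the $(2,2)$ entry. Since $K'\cong K$, the matrices $V$ and $V'$ are $S$-equivalent; hence the presentation matrices $V+V^T$ and $V'+V'^T$ of $H_1(Y_K)\cong H_1(Y_{K'})$ have equal invariant factors. For a matrix $\left(\begin{smallmatrix}0&b+c\\ b+c&2m\end{smallmatrix}\right)$ with $b+c$ odd, the first invariant factor is $\gcd(b+c,m)$; so $\gcd(b+c,d)=\gcd(b+c,d')$ for two values $d,d'$ that---by the control on $d'-d$ coming from the twist, together with the crossing being non-nugatory---cannot simultaneously share a common factor with the odd number $b+c$. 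Therefore $\gcd(b+c,d)=1$, the first invariant factor of $V+V^T$ is $1$, and $H_1(Y_K)$ is cyclic.

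I expect the main obstacle to be the geometric step of placing $L$ on a minimal genus Seifert surface and normalizing $D\cap F$; this is the technical heart, and it is where sutured manifold theory and the nontriviality of $K$ (to forbid genus reduction) are essential. The delicate point in part (2) is to pin down exactly which elementary modification of the Seifert matrix the crossing change effects---in particular that, up to congruence, it adds $\pm 2$ to one diagonal entry of $V+V^T$---and to dispose of the degenerate configurations ($L$ separating, $\alpha$ boundary parallel), in each of which one shows directly that the crossing is nugatory, contradicting the hypothesis.
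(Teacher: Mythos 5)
Your reduction to $2\times 2$ Seifert matrices and your part (2) are close to the paper's argument, but part (1) has a fatal gap. As written, your proof that $\mathrm{lk}(L,L^+)=0$ uses only that the twist along $D$ preserves the genus of $K$ (plus nontriviality of $K$); it never uses the isotopy $K\cong K'$ through an actual invariant. That cannot work: there exist non-nugatory, genus-preserving crossing changes between non-isotopic genus one knots neither of which is algebraically slice --- for instance a crossing change in the twist region turns $5_2$ into $3_1$, both of genus one with determinants $7$ and $3$ --- and your argument applied to such a crossing would manufacture a metabolizer for $5_2$. The step that fails is the geometric one: the crossing circle $L$ cannot in general be isotoped onto a minimal genus Seifert surface $F$ with $[L]$ primitive and $D\cap F=L\sqcup\alpha$. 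Gabai's machinery, as the paper uses it (Proposition 2.4), gives only that a Seifert surface chosen of minimal genus in the complement of $L$, normalized so that $D\cap S$ is a single properly embedded arc $\alpha$, remains minimal genus for both $K$ and $K'$; it does not place $L$ on the surface. Indeed, in your hypothesized configuration the complement in $D$ of a neighborhood of $\alpha$ is an annulus with interior disjoint from $F$ joining $L$ to a small circle encircling $\alpha$, and that circle retracts in $\eta(F)$ onto $\alpha$ traversed once in each direction; so such an $L$ would be null-homologous rather than primitive, and by your own observation the crossing would be nugatory.

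The cosmetic hypothesis has to enter part (1) through a genuine comparison of $K$ with $K'$, and in the paper it does so via the Alexander polynomial. Keeping $L$ off the surface, one takes $a_1$ with $a_1\cdot\alpha=1$ and $a_2$ completing a symplectic basis with $a_2\cdot\alpha=0$; the twist then changes only the $(1,1)$ entry, giving $V=\left(\begin{smallmatrix}a&b\\ c&d\end{smallmatrix}\right)$ and $V'=\left(\begin{smallmatrix}a-\epsilon&b\\ c&d\end{smallmatrix}\right)$, and the equality $\Delta_K\doteq\Delta_{K'}$ of the two palindromic polynomials $ad(1-t)^2-(b-ct)(c-tb)$ and $(a-\epsilon)d(1-t)^2-(b-ct)(c-tb)$ forces $d=0$. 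That comparison is the missing idea in your part (1). Your part (2) --- invariant factors of $V+V^T$, the off-diagonal entry $2b+1$ odd, and the observation that a common divisor of $2a$ and $2a+2\epsilon$ dividing an odd number must be $1$ --- is essentially the paper's Lemma 4.2 and Theorem 4.1 and goes through once part (1) is repaired.
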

For knots that admit unique (up to isotopy) minimal genus Seifert surfaces we have the following stronger result.
\begin{named}{Theorem \ref{Thm:unique_minimal_genus}}
Let $K$ be an oriented genus one knot with a unique minimal genus Seifert surface, which admits a cosmetic crossing.  Then $\Delta_K(t) \doteq 1$.
\end{named}

Given a knot $K$ let $D_{+}(K, n)$ denote the $n$-twisted, positive-clasped Whitehead double  of $K$
 and let $D_{-}(K, n)$ denote the $n$-twisted, negative-clasped Whitehead double  of $K$.
 Theorems \ref{general} and \ref{Thm:unique_minimal_genus} can be used to prove the nugatory crossing conjecture for several classes of Whitehead doubles.
 For example, Theorem \ref{Thm:unique_minimal_genus}, combined with results of Lyon and Whitten \cite{Lyons, whitten},
 gives  the following. (See Section \ref{sec:examples} for more results in this vein.)
 \begin{corollary} If $K$ is a non-cable knot, then, for every $n\neq 0$,  $D_{\pm }(K, n)$ admits no cosmetic crossings.
 \end{corollary}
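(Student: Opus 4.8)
The plan is to show that, for $n \neq 0$ and $K$ a non-cable knot, the knot $D_{\pm}(K,n)$ satisfies the hypotheses of Theorem~\ref{Thm:unique_minimal_genus} while having a nontrivial Alexander polynomial, which by that theorem is impossible if the knot has a cosmetic crossing. First I would record the standard genus one Seifert surface: the $n$-twisted Whitehead clasp in the solid torus $V$ bounds a once-punctured torus $F$, namely a disc with two clasped bands one of which carries $n$ full twists, and embedding $V$ as a tubular neighbourhood of $K$ carries $F$ to a Seifert surface $\Sigma$ for $D_{\pm}(K,n)$. A Seifert matrix for $F$ in the evident basis is $\left( \begin{smallmatrix} \mp 1 & 1 \\ 0 & n \end{smallmatrix} \right)$, whence $\Delta_{D_{\pm}(K,n)}(t) \doteq n t - (2n \pm 1) + n t^{-1}$; its leading coefficient is $n$, so for $n \neq 0$ it has breadth exactly two and in particular $\Delta_{D_{\pm}(K,n)}(t) \not\doteq 1$. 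Hence $D_{\pm}(K,n)$ is knotted and its Seifert genus is exactly one, realised by $\Sigma$.

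The heart of the argument is to show that, when $K$ is not a cable knot, $\Sigma$ is the unique minimal genus Seifert surface of $D_{\pm}(K,n)$ up to isotopy; this is exactly the input supplied by the work of Lyon~\cite{Lyons} and Whitten~\cite{whitten}. The mechanism is that cutting the exterior of $D_{\pm}(K,n)$ along $\Sigma$ yields a compact manifold obtained by gluing the exterior $E(K)$ along $\partial N(K)$ to a model piece depending only on the pattern $F$; a second minimal genus Seifert surface not isotopic to $\Sigma$ would, after the customary innermost-disc and outermost-arc clean-up, force an essential annulus through this decomposition and ultimately into $E(K)$, and the non-cable hypothesis on $K$ is precisely what excludes the relevant essential annuli there. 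I expect this to be the main obstacle in a complete write-up: one must verify that Lyon's and Whitten's uniqueness results, originally phrased for certain families of doubled knots, apply uniformly to $D_{+}(K,n)$ and $D_{-}(K,n)$ for every non-cable $K$ and every $n$.

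Granting these two facts, the corollary follows at once. For $n \neq 0$ and $K$ a non-cable knot, $D_{\pm}(K,n)$ is an oriented genus one knot with a unique minimal genus Seifert surface; were it to admit a cosmetic crossing, Theorem~\ref{Thm:unique_minimal_genus} would force $\Delta_{D_{\pm}(K,n)}(t) \doteq 1$, contradicting the computation above. Therefore $D_{\pm}(K,n)$ admits no cosmetic crossing. I would remark that Theorem~\ref{general} alone already handles those $n$ for which $D_{\pm}(K,n)$ fails to be algebraically slice, but not all $n$, which is why the finer Theorem~\ref{Thm:unique_minimal_genus} is used here. The one degenerate possibility is $K$ the unknot, for which $D_{\pm}(K,n)$ is a twist knot and hence $2$-bridge, so the conclusion follows instead from Torisu's theorem~\cite{torisu} (equivalently, a twist knot visibly has a unique minimal genus Seifert surface, so Theorem~\ref{Thm:unique_minimal_genus} again applies directly).
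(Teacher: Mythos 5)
Your proposal is correct and takes essentially the same route as the paper's proof (Corollary \ref{whitehead}(b)): compute the Alexander polynomial from the standard genus one Seifert surface to see it is nontrivial for $n \neq 0$, invoke Lyon and Whitten for uniqueness of the minimal genus Seifert surface when $K$ is not a cable knot, and conclude via Theorem \ref{Thm:unique_minimal_genus}. The additional care you take with the unknot case and with sketching the mechanism behind the Lyon--Whitten uniqueness goes slightly beyond what the paper records, but does not change the argument.
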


Combining Theorem \ref{general}
with a result of Trotter \cite{trotter}, we prove the nugatory crossing conjecture for all the genus one knots with
up to twelve crossings (Theorem \ref{12crossings}) and for several families of pretzel knots (Corollary \ref{pretzels}).
\smallskip

The paper is organized as follows:
In Section \ref{sec:mimimum} we use a result of Gabai \cite{gabai} to prove that a cosmetic crossing change on a knot $K$ can be realized by twisting along an
 essential arc on a minimal genus Seifert surface of $K$ (Proposition \ref{prop:minimum}). For genus one knots such an arc will be non-separating on the surface.
 In subsequent sections this will be our
 starting point for establishing connections between cosmetic crossings and knot invariants determined by Seifert matrices.

 Sections \ref{sec:aslice} and \ref{sec:doublecover} are devoted to the proof of Theorem \ref{general}.
 The proof of this theorem shows that the $S$-equivalence class of the Seifert matrix for a genus one knot provides
 more refined obstructions to cosmetic crossings:
 \begin{corollary}\label{sequivalent}
Let $K$ be a genus one knot. If $K$ admits a cosmetic crossing, then $K$ has a Seifert matrix $V$ of the form
$\begin{pmatrix}a & b \\ b+1 & 0\end{pmatrix}$
which is $S$--equivalent to
$\begin{pmatrix}a +\epsilon & b \\ b+1 & 0\end{pmatrix}$
for some $\epsilon\in \{-1,1\}$.
\end{corollary}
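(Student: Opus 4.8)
The plan is to track how a Seifert matrix of $K$ changes when the cosmetic crossing change is performed, and then to use twice that the crossing is cosmetic: once to get an $S$-equivalence of matrices, and once more, through the Alexander polynomial, to force one entry of the matrix to vanish. First I would set up using Proposition~\ref{prop:minimum}: the cosmetic crossing on $K$ is realized by twisting $K$ along an essential arc $\alpha$ on a genus one Seifert surface $F$, and, $F$ having genus one, $\alpha$ is non-separating. Then $F \cut \alpha$ is an annulus; let $c$ be its core, a non-separating simple closed curve on $F$ disjoint from $\alpha$. Thus $[c]$ is primitive in $H_1(F)\cong\ZZ^2$, and one can pick a simple closed curve $d$ on $F$ meeting each of $\alpha$ and $c$ transversely in a single point (join the two copies of $\alpha$ in the annulus $F\cut\alpha$ by an embedded arc and close it up), so that $\{[d],[c]\}$ is a symplectic basis of $H_1(F)$. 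With respect to the ordered basis $([d],[c])$ the Seifert form of $F$ is given by a matrix $V=\begin{pmatrix} p & q \\ r & s\end{pmatrix}$ with $r-q=\pm 1$ (this difference is the intersection number $[d]\cdot[c]$); after reversing the orientation of $d$ if necessary I may assume $V=\begin{pmatrix} a & b \\ b+1 & s\end{pmatrix}$.

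Next I would record the effect of the twist. Isotope the crossing disc $D$ so that $D\cap F=\alpha$; since $\partial D$ misses $K=\partial F$ this forces $\partial D\cap F=\emptyset$, and the crossing change is $\mp 1$-surgery on the unknot $\partial D$. A curve carried by $F$ meets the interior of $D$ algebraically as often as it meets $\alpha$ inside $F$, so $d\cdot D=\pm 1$, $c\cdot D=0$ and $K\cdot D=0$. By the standard twisting formula for linking numbers, the Seifert form of the twisted surface $F'$ with respect to the same basis $([d],[c])$ — still meaningful, since $F$ and its complement are modified only near $\partial D$ — is $V'=V+\epsilon E_{11}=\begin{pmatrix} a+\epsilon & b \\ b+1 & s\end{pmatrix}$, where $\epsilon\in\{-1,1\}$ is the sign of the surgery and $E_{11}=\begin{pmatrix} 1 & 0 \\ 0 & 0\end{pmatrix}$. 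Since the crossing is cosmetic the resulting knot $K'$ is isotopic to $K$; as $V$ and $V'$ are Seifert matrices of $K$ and of $K'$ respectively, they are $S$-equivalent.

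It then remains to show $s=0$, which is the only real content beyond Proposition~\ref{prop:minimum}. Since $K'$ is isotopic to $K$ we have $\Delta_{K'}(t)\doteq\Delta_K(t)$. On the other hand a direct computation gives $\det(tV-V^T)=(as-b(b+1))(t^2+1)+(2b^2+2b+1-2as)\,t$, and $\det(tV'-(V')^T)$ is obtained from this by replacing $a$ with $a+\epsilon$, that is, by adding $\epsilon s\,(t-1)^2$. Both determinants are palindromic of degree at most two, so they coincide up to sign (the power-of-$t$ ambiguity in $\doteq$ is removed by comparing degrees and constant terms). If they are equal then $\epsilon s=0$, hence $s=0$; if they differ by a sign, matching leading and middle coefficients forces $1=0$, which is impossible. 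So $s=0$, and then $V=\begin{pmatrix} a & b \\ b+1 & 0\end{pmatrix}$ is $S$-equivalent to $V'=\begin{pmatrix} a+\epsilon & b \\ b+1 & 0\end{pmatrix}$, as required. (This matrix equality is also exactly what is needed to deduce both conclusions of Theorem~\ref{general}, which explains the remark preceding the statement.)

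I expect the main obstacle to be not the linear algebra above, but the step asserting that twisting along $\alpha$ changes $V$ by precisely $\epsilon E_{11}$ in the $\alpha$-adapted basis: this requires combining the twisting formula with the algebraic intersection numbers of $d$, $c$ and $K$ with the crossing disc, and keeping careful track of orientation and sign conventions so that the Seifert matrix comes out as $\begin{pmatrix} a & b \\ b+1 & 0\end{pmatrix}$ rather than its transpose. The deeper input, namely that the crossing disc can be put in the normal form $D\cap F=\alpha$ with $\alpha$ essential and non-separating on a genus one Seifert surface, is Proposition~\ref{prop:minimum} (which rests on Gabai's theorem) and may be assumed here.
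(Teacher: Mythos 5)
Your proposal is correct and follows essentially the same route as the paper: realize the cosmetic crossing via Proposition~\ref{prop:minimum} as a twist along a non-separating arc $\alpha$ on a genus one Seifert surface, choose a symplectic basis adapted to $\alpha$ so that only the $(1,1)$ entry of the Seifert matrix changes by $\pm 1$, and use $\Delta_K \doteq \Delta_{K'}$ to force the $(2,2)$ entry to vanish (the paper does this inside the proof of Theorem~\ref{aslice} and then reads off the corollary). Your explicit handling of the $\pm t^k$ ambiguity in $\doteq$ is a welcome refinement of the paper's ``which easily implies that $d=0$,'' but it is the same argument.
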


In Section \ref {Section:S_equivalence} we study the question of whether
the  $S$--equivalence class of the Seifert matrix of a genus one knot
 contains enough information to resolve the nugatory crossing conjecture (Question \ref{qu:sequivalent}). Using Corollary \ref{sequivalent} we prove Theorem \ref{Thm:unique_minimal_genus} which implies
 the nugatory crossing conjecture for  genus one knots with non--trivial Alexander polynomial with a \emph{unique} minimal genus Seifert surface. We also construct examples showing that  Corollary \ref{sequivalent} is not enough to prove the nugatory crossing conjecture
for \emph{all}
genus one knots with non--trivial Alexander polynomial (Proposition \ref{prop:sequivalent}).

   In Sections \ref{sec:lowcrossings} and \ref{sec:examples} we provide examples of knots for which Theorems \ref{general} and
\ref{Thm:unique_minimal_genus} settle the nugatory crossing question. In Section  \ref{sec:lowcrossings}
we combine Theorem \ref{general} and Corollary \ref{sequivalent} with a result of Trotter to
settle the conjecture for all the 23 genus one knots with up to 12 crossings.
The examples we discuss in Section \ref{sec:examples}
 are twisted Whitehead doubles and pretzel knots.
\vskip 0.04in

 Throughout the paper we will discuss oriented knots in an oriented $S^3$ and we work in the smooth category.

\subsection*{Acknowledgement} CB and EK thank Matt Hedden and Matt Rathbun for helpful discussions.
Part of this work completed while MP was a visitor at WWU M\"{u}nster, which he thanks for its hospitality.
\smallskip

\section{Crossing changes and arcs on surfaces} \label{sec:mimimum}
 In this section we use a result of Gabai \cite{gabai} to prove that a cosmetic crossing change on a knot $K$ can be realized by twisting along an
essential arc on a minimal genus Seifert surface of $K$ (Proposition \ref{prop:minimum}). For genus one knots such an arc will be non-separating on the surface.
in the next sections this will be our
 starting point for establishing connections between cosmetic crossings and knot invariants determined by Seifert matrices.
 \vskip 0.1in

Let $K$ be an oriented  knot in $S^3$ and $C$ be a crossing
of sign $\epsilon$, where $\epsilon=1$ or $-1$ according to whether $C$ is a
positive or negative crossing (see Figure 1).
A \emph{crossing disc} of $K$ corresponding to $C$
is an embedded disc $D\subset S^3$
such
that $K$ intersects ${\rm int}(D)$ twice, once for each branch of $C$, with
zero algebraic intersection number. The boundary $L = \partial D$ is called a
\emph{crossing circle}.
Performing $({\textstyle {{-\epsilon}}})$-surgery on $L$,
changes $K$ to another knot $K^{'}\subset S^3$ that
is obtained from $K$
by changing the crossing $C$.

\begin{define} \label{nugat} A crossing  supported on a crossing circle $L$
of an oriented knot $K$ is called  \emph{nugatory} if
$L = \partial D$ also bounds an embedded disc in the complement of $K$. This disc and $D$ form an
embedded
2-sphere that decomposes $K$ into a connected sum
where some of the summands may be trivial.
A non-nugatory crossing on a knot $K$ is called
\emph{cosmetic} if the oriented knot $K'$ obtained
from $K$ by changing $C$ is isotopic to $K$; that is, there exists an orientation-preserving diffeomorphism
$f: S^3\longrightarrow S^3$ with $f(K)=K'$.
\qed\end{define}
\vskip .1in

\begin{figure}
\includegraphics[width=1.3in]{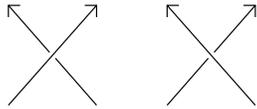}
\caption{Left: a positive crossing. Right: a negative crossing.}\label{cr_signs}
\end{figure}

For a link $J$ in $S^3$ we will use $\eta(J)$ to denote a regular neighborhood of $J$ in $S^3$
and we will use $M_J \colonequals \overline {S^3\setminus \eta(J)}$ to denote the closure of the complement of $\eta(J)$ in $S^3$.

\begin{lemma} \label{lem:irreducible} Let $K$ be an oriented knot and $L$ a crossing circle supporting a crossing $C$ of $K$.
Suppose that $M_{K \cup L}$ is reducible. Then $C$ is nugatory.

\end{lemma}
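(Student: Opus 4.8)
The plan is to exploit that $S^3$ is irreducible, so a reducing sphere for $M_{K\cup L}$ must become inessential once $\eta(K\cup L)$ is filled back in, which pins down its position relative to $K$ and $L$. First I would take an essential $2$-sphere $S\subset\mathrm{int}(M_{K\cup L})$. By the Schoenflies theorem $S$ bounds balls $B_1,B_2$ in $S^3$, and since $K$ and $L$ are each connected and disjoint from $S$, each lies entirely in $\mathrm{int}(B_1)$ or entirely in $\mathrm{int}(B_2)$. If they lay in the same ball, the other ball would be a ball in $M_{K\cup L}$ with boundary $S$, contradicting essentiality of $S$. So, after relabeling, $K\subset\mathrm{int}(B_1)$ and $L\subset\mathrm{int}(B_2)$.

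Next I would bring in a crossing disc $D$ for $C$: $\partial D=L$, the interior of $D$ meets $K$ transversely in two points, and $D\cdot K=0$, so $\mathrm{lk}(L,K)=0$ and every embedded disc bounded by $L$ meets $K$ in an even number of points. Make $D$ transverse to $S$, so $D\cap S$ is a disjoint union of circles, none equal to $\partial D$ since $L\cap S=\emptyset$. Now run the standard innermost-disc surgery: pick $\gamma\subset D\cap S$ innermost on $S$, bounding a subdisc $E\subset S$ with $\mathrm{int}(E)\cap D=\emptyset$; on $D$ the curve $\gamma$ bounds a subdisc $F$, and the rest of $D$ is an annulus $A$ with $\partial A=L\sqcup\gamma$. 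Replace $D$ by $\hat D=A\cup_\gamma E'$, where $E'$ is a slight push-off of $E$ taken to whichever side of $S$ the annulus $A$ does not approach near $\gamma$. Then $\hat D$ is again an embedded disc with $\partial\hat D=L$, it has strictly fewer intersection circles with $S$, and $\hat D\cap K=(D\cap K)\setminus F$ (the push-off $E'$ is disjoint from $K$ because $K$ lies at positive distance from $S$).

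The \emph{key point} is the bookkeeping on $\hat D\cap K$. Since $\partial\hat D=L$ and $\mathrm{lk}(L,K)=0$, the number $|\hat D\cap K|=2-|F\cap K|$ must be even, so $F$ contains either $0$ or $2$ of the two points of $D\cap K$. If at some stage $F$ contains both, then $\hat D$ is an embedded disc bounded by $L$ and disjoint from $K$, which is exactly the definition of a nugatory crossing, and we are done. Otherwise every surgery leaves $\hat D\cap K$ equal to the original two points while strictly decreasing $|\hat D\cap S|$; after finitely many steps we reach a disc with $\hat D\cap S=\emptyset$ and $\partial\hat D=L\subset\mathrm{int}(B_2)$, forcing $\hat D\subset\mathrm{int}(B_2)$, which is disjoint from $K\subset\mathrm{int}(B_1)$ and contradicts $\hat D\cap K\neq\emptyset$. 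Hence the first alternative must occur and $C$ is nugatory.

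I expect the only real subtlety to be the bookkeeping in the innermost-disc surgery: choosing the innermost curve and the push-off direction of $E$ so that $\hat D$ stays embedded with $|\hat D\cap S|$ strictly smaller, and confirming that $\hat D\cap K$ is exactly $(D\cap K)\setminus F$. Disjointness of the pushed-off $E'$ from $K$ is automatic because $K$ is a positive distance from $S$, and the parity of $|\hat D\cap K|$ is then forced by $\mathrm{lk}(L,K)=0$, so there is no genuine danger of stranding a single intersection point; everything else is formal.
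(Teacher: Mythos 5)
Your proof is correct. Its first step --- an essential sphere in $M_{K\cup L}$ bounds balls on both sides in $S^3$ by Alexander/Schoenflies, and connectedness of $K$ and $L$ plus essentiality force the sphere to separate $K$ from $L$ --- is exactly the paper's first step. After that you diverge: the paper simply notes that $L$ is an unknot lying in a ball disjoint from $K$ and therefore bounds a disc in the complement of $K$ (implicitly using the standard fact that an unknot contained in a ball bounds a disc inside that ball). You instead surger the crossing disc $D$ itself along the reducing sphere, using $\operatorname{lk}(L,K)=0$ to show that each innermost subdisc $F\subset D$ swallows either $0$ or $2$ of the two points of $D\cap K$, and concluding that at some stage both points must be discarded, since otherwise you would terminate with a disc contained in $B_2$ that still meets $K\subset B_1$. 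Your route is longer but more self-contained: it never invokes the unknottedness of $L$, only the defining properties of a crossing disc, and it hands you the nugatory disc explicitly. The paper's route is shorter but leans on a standard fact whose honest proof is itself an innermost-disc argument of the same flavour. The one detail to tidy in yours is the usual one, which you flag: after replacing $F$ by the push-off $E'$ you must also push $\partial E'=\gamma$ off $S$ (absorbing a small collar of the annulus $A$) so that the count of circles of $\hat D\cap S$ genuinely decreases; with that, the induction terminates as claimed.
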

\begin{proof} An essential 2-sphere in $M_{K \cup L}$ must separate
$\eta(K)$ and $\eta(L)$. Thus in $S^3$,  $L$ lies in a 3-ball disjoint from $K$.
Since $L$ is unknotted, it bounds a disc in the complement of $K$.
 \end{proof}

Let $K$ be an oriented knot and $L = \partial D$ a crossing circle supporting a  crossing $C$.
Let $K'$ denote the knot obtained from $K$ by changing $C$.
Since the linking number of $L$ and $K$ is zero, $K$ bounds a Seifert surface in the complement of $L$. Let
$S$ be a Seifert surface that is of minimal genus among all such Seifert surfaces in the complement of $L$. Since $S$ is incompressible, after an isotopy we can arrange so that the closed components of  $S\cap D$ are homotopically essential in $D\setminus K$.
But then each such component is parallel to $\partial D$ on $D$  and by further modification we can arrange so that
$S\cap D$ is a single arc $\alpha$  that is properly embedded on $S$ as illustrated in Figure \ref{alpha}.
The surface
$S$
gives rise
to Seifert surfaces $S$ and $S'$ of $K$ and $K'$, respectively.
\begin{figure}
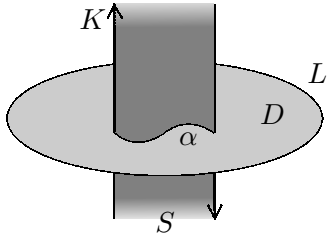
\caption{The crossing arc $\alpha = S \cap D$.}\label{alpha}
\end{figure}

\begin{prop} \label{prop:minimum}
Suppose that $K$ is isotopic to $K'$. Then $S$ and $S'$ are Seifert surfaces of minimal genus for $K$ and $K'$, respectively.
\end{prop}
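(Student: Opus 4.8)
The plan is to use the crossing circle $L$ as a tool to compare the Seifert genus of $K$ with the genus of the surface $S$ (which is minimal in the complement of $L$ by construction), exploiting the hypothesis $K \cong K'$ together with Gabai's theorem on the behavior of genus under the surgery that realizes the crossing change. First I would dispose of the degenerate case: if $M_{K\cup L}$ is reducible, then by Lemma~\ref{lem:irreducible} the crossing $C$ is nugatory, and in that case changing $C$ does not change the isotopy type, $S$ can be taken to be an actual minimal genus Seifert surface for $K$ in $S^3$, and there is nothing to prove. So I would assume from now on that $M_{K\cup L}$ is irreducible.

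Next I would recall the setup: $S$ is a Seifert surface for $K$ of minimal genus \emph{among surfaces disjoint from $L$}, and by the discussion preceding the proposition we have arranged $S\cap D = \alpha$, a single properly embedded arc, so that twisting $S$ along $D$ produces a Seifert surface $S'$ for $K'$ with $\operatorname{genus}(S')=\operatorname{genus}(S)$. Let $g = \operatorname{genus}(K)$ and $g' = \operatorname{genus}(K')$. Since $K$ and $K'$ are isotopic we have $g = g'$. The content of the proposition is the inequality $\operatorname{genus}(S)\le g$, since the reverse inequality is automatic ($S$ is \emph{some} Seifert surface for $K$). The key input is Gabai's result (from \cite{gabai}): because $M_{K\cup L}$ is irreducible, the minimal genus of a Seifert surface for $K$ in the complement of $L$ equals $\max\{g, g'\}$ — i.e. performing either of the two fillings on $L$ that undo the crossing circle cannot decrease the genus of $K$ below the genus of the surface living in the complement of $L$, and moreover that surface can be chosen of genus exactly $\max\{g,g'\}$. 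I would cite this as the main theorem of \cite{gabai} (the ``sutured manifold'' genus-detection statement), applied to the link $K\cup L$ with the two relevant Dehn fillings on $L$.

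Putting these together: $\operatorname{genus}(S) = \max\{g, g'\} = g = g'$, where the first equality is Gabai's theorem, the second and third use $g=g'$. Hence $S$ realizes the Seifert genus of $K$ and, by the same token, $S'$ (which has the same genus as $S$ and is a Seifert surface for $K'$) realizes the Seifert genus of $K'$. This is exactly the assertion of Proposition~\ref{prop:minimum}.

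The step I expect to be the main obstacle is the precise invocation of Gabai's theorem: one must make sure the hypotheses of the sutured-manifold machinery are met — in particular that $M_{K\cup L}$ is irreducible (handled above) and that the surface $S$ is taut / of minimal genus in the link complement, and that the two surgeries on $L$ in question are the ones producing $K$ and $K'$ — and then quote the statement that the Thurston norm (equivalently, here, the genus) is detected so that it cannot drop under these fillings. All the rest is bookkeeping about how twisting along $D$ transforms $S$ into $S'$ without changing genus, which has already been set up in the text preceding the statement.
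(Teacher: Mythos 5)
Your proposal is correct and follows essentially the same route as the paper: dispose of the nugatory/reducible case via Lemma~\ref{lem:irreducible}, then apply Gabai's result (the paper cites \cite[Corollary 2.4]{gabai}) that the taut surface $S$ in $M_{K\cup L}$ remains taut under all but at most one of the two fillings, and conclude from $\operatorname{genus}(K)=\operatorname{genus}(K')$ that it is genus-minimizing in both. Your reformulation of Gabai's input as ``the minimal genus in the complement of $L$ equals $\max\{g,g'\}$'' is just a repackaging of that same corollary.
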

\begin{proof} If the crossing is nugatory then $L$ bounds a disc in the complement of $S$ and the conclusion is clear.
Suppose the crossing is cosmetic; by Lemma \ref{lem:irreducible},
$M_{K \cup L}$ is irreducible.
We can consider the surface $S$ properly embedded in $M_{K \cup L}$
so that it is disjoint from
$\partial \eta(L) \subset \partial M$.
The assumptions
on irreducibility of $M_{K \cup L}$ and
on the genus of $S$ imply that the foliation machinery of Gabai \cite{gabai} applies. In particular,
$S$ is taut in the Thurston norm of $M_{K\cup L}$. The manifolds
$M_K$ and $M_{K'}$ are obtained by Dehn filling of $M_{K \cup L}$ along $\partial \eta(L)$.
By \cite[Corollary 2.4] {gabai}, $S$ can fail to remain taut in the Thurston norm
(i.e. genus minimizing)
in at most one of $M_K$ and $M_{K'}$.   Since we have assumed that $C$ is a cosmetic crossing,  $M_K$ and $M_{K'}$
are homeomorphic (by an orientation-preserving homeomorphism). Thus $S$
remains taut in both of $M_K$ and $M_{K'}$. This implies that $S$
and $S'$ are Seifert surfaces of minimal genus for $K$ and $K'$, respectively.
\end{proof}

By Proposition \ref{prop:minimum}, a crossing change of a knot $K$ that produces an isotopic knot
corresponds to a properly embedded arc $\alpha$ on a minimal genus Seifert surface $S$ of $K$.
We observe the following.
\begin{lemma} \label{lem:essential} If $\alpha$ is inessential on $S$, then the crossing is nugatory.
\end{lemma}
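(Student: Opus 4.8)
The plan is to convert the hypothesis into an explicit subdisc of $S$ and then use that subdisc to isotope the crossing disc $D$ off of $K$, directly exhibiting the disc required by Definition~\ref{nugat}.

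\emph{Step 1 (unwinding ``inessential'').} Since $\alpha$ is a properly embedded arc on $S$ that is inessential, it cuts off a disc: there is an embedded disc $\Delta\subset S$ with $\partial\Delta=\alpha\cup\beta$, where $\beta$ is a subarc of $\partial S=K$ with $\partial\beta=\partial\alpha$. From $\Delta\subset S$ together with $S\cap D=\alpha$, $S\cap K=\partial S$, and $S\cap L=\emptyset$ (recall $L=\partial D$ and $S$ lies in the complement of $L$), one reads off that $\mathrm{int}(\Delta)$ is disjoint from both $D$ and $K$, and that $\Delta$ is disjoint from $L$. Moreover $K\cap D$ consists exactly of the two endpoints $\partial\alpha$, which are interior points of $D$ lying on $\beta$.

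\emph{Step 2 (the bigon/finger move).} Let $B=\eta(\Delta)$ be a small ball. Using Step 1, arrange that $B\cap D$ is a band neighbourhood $\eta(\alpha;D)$ of $\alpha$ in $D$ (containing the two points of $K\cap D$), that $B\cap K$ is an arc neighbourhood $\eta(\beta;K)$ of $\beta$ in $K$, and that $B\cap L=\emptyset$. Guided by $\Delta$, I would isotope $D$ inside $B$, keeping $\partial D=L$ fixed, by sliding the band $\eta(\alpha;D)$ across $\Delta$ until it lies near $\beta$, and then nudging it slightly off $K$ to one side. Call the resulting disc $D'$. Since the isotopy is supported in $B$ and fixes $L$ we still have $\partial D'=L$; since $D\cap K\subset B$ and the slid band avoids $\eta(\beta;K)$, we get $D'\cap K=\emptyset$. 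Thus $L$ bounds the embedded disc $D'$ in the complement of $K$, so the crossing is nugatory.

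\emph{Main obstacle.} The delicate point is making the finger move of Step 2 precise: one must check that the band $\eta(\alpha;D)$ can be slid across $\Delta$ and past $\beta$ while (i) keeping the whole surface $D$ embedded and (ii) ending up disjoint from $K$. This is routine once one notes that inside $B\cong\Delta\times[-\epsilon,\epsilon]$ the only part of $K$ present is the arc $\eta(\beta;K)$ lying against the boundary, and that the isotopy is just an ambient isotopy of $B$ moving the single band $B\cap D$; but it is the one place where the topology has to be looked at carefully. (Equivalently, one may instead isotope $K$ across $\Delta$: then the relevant observation is that the two components of $K\setminus\partial\alpha$ approach $D$ transversally from opposite sides, so $K$ can be pushed to a coherent side of $D$ near $\beta$ — the two formulations are interchangeable.)
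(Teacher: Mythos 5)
Your proof is correct and is essentially the paper's argument: both use the disc that $\alpha$ cuts off from $S$ to replace the sub-disc of $D$ containing the two points of $K\cap D$ by a disc that misses $K$, the paper packaging the move as a cut-and-paste along the boundary $2$-sphere of a regular neighborhood of the cut-off disc rather than as an ambient isotopy of $D$. The delicate point you flag (that the slid band ends up embedded and disjoint from $K$) is exactly what the paper's regular-neighborhood formulation handles, so the two write-ups are interchangeable.
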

\begin{proof} Recall that $\alpha$ is the intersection of a crossing disc $D$ with $S$.
Since $\alpha$ is inessential, it separates $S$ into two pieces, one of which is a disc $E$.
Consider $D$ as properly embedded in a regular neighborhood $\eta(S)$ of the surface $S$.
The boundary of a regular neighborhood of $E$ in $\eta (S)$ is a 2-sphere
that contains the crossing disc $D$. The complement of the interior of $D$ in that 2-sphere gives a disc
bounded by the crossing circle $L=\partial D$ with its interior disjoint from the  knot $K=\partial S$.
\end{proof}

\section{Obstructing cosmetic crossings in genus one knots}  \label{sec:aslice}
A  knot  $K$ is called \emph{algebraically slice}
if it admits a Seifert surface $S$ such that the Seifert form
$\theta: H_1(S)\times H_1(S)\longrightarrow \ZZ$
vanishes on a half-dimensional summand of $H_1(S)$; such a summand is called a \emph{metabolizer} of $H_1(S)$.
If $S$ has genus one, then the existence  of a metabolizer for $H_1(S)$ is equivalent to the existence
of an essential oriented simple closed curve on $S$ that has
zero self-linking number. If $K$ is algebraically slice, then the Alexander polynomial
$\Delta_K(t)$ is of the form $\Delta_K(t) \doteq f(t) f(t^{-1})$,
where $f(t)\in \ZZ[t]$ is a linear polynomial with integer coefficients
and $\doteq$ denotes equality up to multiplication by a unit in the ring of Laurent polynomials ${\ZZ}[t, t^{-1}]$.
For more details on these and other classical knot theory concepts we will use in this and the next section, the reader is referred to
\cite{burde-zieschang:knots} or  \cite{lickorish:book}.

\begin{theorem}\label{aslice} Let $K$ be an oriented genus one knot.
If $K$ admits a cosmetic crossing, then it is algebraically slice. In particular,
there is a   linear polynomial $f(t)\in \ZZ[t]$ such that
 $\Delta_K(t)\doteq f(t) f(t^{-1})$.

\end{theorem}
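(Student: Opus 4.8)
The plan is to use Proposition~\ref{prop:minimum} together with Lemma~\ref{lem:essential} to reduce the hypothesis to a statement about a single essential arc on a genus one Seifert surface, and then to show that the arc produces a curve with vanishing self-linking. By Proposition~\ref{prop:minimum}, a cosmetic crossing on $K$ is realized by twisting along a properly embedded arc $\alpha$ on a minimal genus (hence genus one) Seifert surface $S$ for $K$, and by Lemma~\ref{lem:essential} this arc must be essential on $S$. Since $S$ has genus one, $H_1(S) \cong \ZZ^2$, and I would fix a symplectic basis $\{x,y\}$ for $H_1(S)$ with respect to the intersection form. An essential properly embedded arc $\alpha$ on $S$ has a well-defined homology class in $H_1(S, \partial S) \cong H_1(S)$; call it $[\alpha]$. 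The key geometric observation is that twisting $S$ along $\alpha$ changes the Seifert form only in the ``direction'' recorded by $[\alpha]$: concretely, if $V = (v_{ij})$ is the Seifert matrix of $K$ with respect to a basis adapted to $\alpha$, then the Seifert matrix $V'$ of $K'$ differs from $V$ by adding $\pm 1$ to a single diagonal entry (the entry corresponding to a curve dual to $\alpha$). This is the content that is made explicit in Corollary~\ref{sequivalent}.

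So the next step is to set up the bases carefully. Choose a simple closed curve $\gamma$ on $S$ that meets $\alpha$ transversely in one point (this exists because $\alpha$ is essential and $S$ has one boundary component), and complete $\{[\gamma]\}$ to a basis $\{[\gamma], [\delta]\}$ of $H_1(S)$ where $\delta$ is a simple closed curve disjoint from $\alpha$. With respect to $\{[\gamma],[\delta]\}$, write the Seifert matrix as $V = \bp a & b \\ c & d \ep$ with $c - b = \pm 1$ coming from the intersection form; reorienting if necessary we may take $V = \bp a & b \\ b+1 & 0\ep$ after a further change of basis exploiting the fact that $\delta$ can be chosen to bound (this is where the ``$0$'' in the lower right appears, using that $\delta$ lies in the complement of the crossing disc and can be taken to have zero self-linking, or alternatively by a direct normal form argument for genus one Seifert matrices). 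Twisting along $\alpha$ adds $\epsilon$ to the self-linking of $\gamma$ and leaves the other entries fixed, so $V' = \bp a+\epsilon & b \\ b+1 & 0\ep$. Since $K \cong K'$ (orientation-preserving), the matrices $V$ and $V'$ are $S$-equivalent, which yields Corollary~\ref{sequivalent}.

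To extract algebraic sliceness, note that $\det V = -b(b+1)$, and since $V$ and $V'$ are $S$-equivalent they present the same Alexander module, so in particular $\det V = \det V'$ gives $-b(b+1) = -b(b+1)$ — no information yet — but comparing more refined invariants forces the needed conclusion. The cleaner route: the curve $\delta$ has self-linking $0$ (the lower-right entry of $V$ is $0$) and is essential on $S$, so $\ZZ\langle [\delta]\rangle$ is a metabolizer for the Seifert form, hence $K$ is algebraically slice by definition. The Alexander polynomial factorization $\Delta_K(t) \doteq f(t)f(t^{-1})$ with $f$ linear is then the standard consequence of having a metabolizer: $\Delta_K(t) \doteq \det(V^T - tV)$, and a metabolizer puts $V$ in a block form making this determinant a norm. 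The main obstacle I anticipate is \emph{justifying the normal form} $\bp a & b \\ b+1 & 0\ep$ — i.e., showing that after twisting we may arrange the curve disjoint from $\alpha$ to have vanishing self-linking. This requires knowing that the crossing-change move interacts with the surface exactly as a Dehn twist along a band neighborhood of $\alpha$, and that among the genus one Seifert surfaces one can always choose the complementary curve to be the one detecting sliceness; handling the case analysis on signs and the possibility that $[\alpha]$ is a primitive versus imprimitive class is the delicate bookkeeping step.
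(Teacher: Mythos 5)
Your setup is the same as the paper's: Proposition \ref{prop:minimum} and Lemma \ref{lem:essential} give an essential (hence non-separating) arc $\alpha$ on a genus one Seifert surface $S$, you pick a curve $\gamma$ meeting $\alpha$ once and a curve $\delta$ disjoint from $\alpha$ completing a symplectic basis, and you correctly note that the crossing change alters only the self-linking of $\gamma$ by $\pm 1$. But there is a genuine gap exactly where you flag it: you \emph{assume} the normal form $\begin{pmatrix} a & b \\ b+1 & 0\end{pmatrix}$, i.e.\ that $\delta$ has zero self-linking, and then read off algebraic sliceness from that zero. That zero is the entire content of the theorem; it cannot be arranged by a choice of $\delta$ or by a ``direct normal form argument,'' since a genus one Seifert matrix is congruent to one with a vanishing diagonal entry precisely when the knot is algebraically slice --- if your normalization worked for an arbitrary $\delta$ disjoint from $\alpha$, every genus one knot would be algebraically slice (the trefoil is not). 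The fact that $\delta$ lies in the complement of the crossing disc says nothing about its self-linking number.

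The missing idea is to leave the Seifert matrix general, $V = \begin{pmatrix} a & b \\ c & d\end{pmatrix}$ with $V' = \begin{pmatrix} a-\epsilon & b \\ c & d\end{pmatrix}$, and compare the Alexander polynomials
$\Delta_K(t) \doteq ad(1-t)^2 - (b-ct)(c-tb)$ and $\Delta_{K'}(t) \doteq (a-\epsilon)d(1-t)^2 - (b-ct)(c-tb)$.
Since $K$ and $K'$ are isotopic, $\Delta_K \doteq \Delta_{K'}$, and comparing the two (both are symmetric and evaluate to $(b-c)^2 = 1$ at $t=1$, so the ambiguity by units disappears) forces $\epsilon d = 0$, hence $d = \mathrm{lk}(a_2, a_2) = 0$. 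Only now is $\delta$ a metabolizer, and $\Delta_K(t) \doteq (b-ct)(b-ct^{-1})$ gives the factorization with $f(t) = b - ct$. Your aside that ``$\det V = \det V'$ gives no information'' shows you were looking at the wrong invariant: it is the full polynomial $\det(V - tV^T)$, not its value at any one point, whose leading coefficient $ad - bc$ must match $(a-\epsilon)d - bc$ and thereby kills $d$.
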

\begin{proof} Let $K'$ be a knot that is obtained from $K$ by a cosmetic crossing change $C$.
By Proposition \ref{prop:minimum}, there is a genus one Seifert surface $S$
such that a crossing disc supporting $C$ intersects $S$ in a properly embedded arc $\alpha \subset S$.
Let $S'$ denote the result of $S$ after the crossing change.
 \begin{figure}
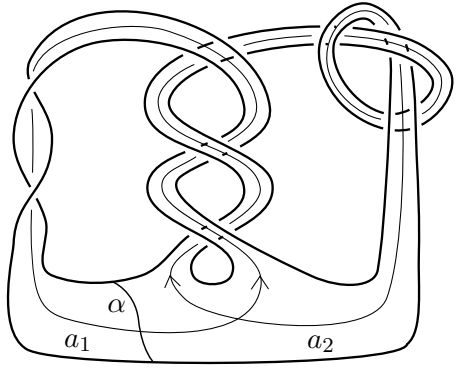
\caption{A genus one surface $S$ with generators $a_1$ and $a_2$ of $H_1 (S)$ and a non-separating arc $\alpha$.}\label{fig1}
\end{figure}
Since $C$ is a cosmetic crossing, by Lemma \ref{lem:essential},  $\alpha$ is essential.  Further, since the genus of $S$ is one, $\alpha$ is non-separating.
We can find a simple closed curve $a_1$ on $S$ that intersects $\alpha$ exactly once. Let $a_2$ be another simple closed curve so that
$a_1$ and $a_2$ intersect exactly once and the homology classes of $a_1$ and $a_2$ form a  symplectic basis for
 $H_1 (S) \cong {\ZZ} \oplus {\ZZ}$.  Note that $\{ a_1, a_2 \}$ form a corresponding basis of $H_1(S')$. See Figure \ref{fig1}.

 The Seifert matrices of $S$  and $S'$ with respect to these bases are
 $$V = \begin{pmatrix}a & b \\ c & d\end{pmatrix} \ \  \ \rm{and}  \ \ \   V'= \begin{pmatrix}a-\epsilon & b \\ c & d\end{pmatrix}$$
respectively, where $a,b,c, d\in {\ZZ}$ and $\epsilon=1$ or $-1$ according to whether $C$ is a positive or a negative crossing.
 The Alexander polynomials of $K$,  $K'$ are given by
 $$\Delta_K(t)\doteq \det( V-tV^T)=ad(1-t)^2-(b-ct)(c-tb),$$
 $$\Delta_{K'}(t)\doteq(a-\epsilon)d(1-t)^2-(b-ct)(c-tb).$$
Since $K$ and $K'$ are isotopic we must have $\Delta_K(t)\doteq\Delta_{K'}(t)$ which easily implies that $d={\rm lk}(a_2, \ a_2)=0$.
Hence $K$ is algebraically slice and
 $$\Delta_K(t)\doteq(b-ct)(c-tb)=(-t)(b-ct)(b-ct^{-1})\doteq (b-ct)(b-ct^{-1}).$$
 Setting $f(t)=b-ct$ we obtain $\Delta_K(t)\doteq f(t) f(t^{-1})$ as desired.  Note that since $\abs{b-c}$ is the intersection number between $a_1$ and $a_2$, by suitable orientation choices, we may assume that $c=b+1$.
\end{proof}

Recall that the determinant of a knot $K$ is defined by $\det(K)=\abs{\Delta_K(-1)}$.
As a corollary of Theorem \ref{aslice} we have the following.

\begin{corollary}\label{determinant} Let $K$ be a genus one knot. If $\det(K)$ is not a perfect square then
$K$ admits no cosmetic crossings.
\end{corollary}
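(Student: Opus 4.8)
The plan is to derive this directly from Theorem \ref{aslice}. Suppose $K$ is a genus one knot admitting a cosmetic crossing; then by Theorem \ref{aslice} there is a linear polynomial $f(t) = b - ct \in \ZZ[t]$ with $\Delta_K(t) \doteq f(t)\,f(t^{-1})$. The determinant is $\det(K) = |\Delta_K(-1)|$, and since the relation $\doteq$ only introduces a unit $\pm t^k$ in $\ZZ[t,t^{-1}]$, evaluating at $t = -1$ preserves absolute values: $\det(K) = |f(-1)\,f(-1)| = |f(-1)|^2 = (b+c)^2$.

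First I would note that $f(-1) = b + c \in \ZZ$, so $\det(K) = (b+c)^2$ is a perfect square. Taking the contrapositive: if $\det(K)$ is not a perfect square, then $K$ cannot admit a cosmetic crossing. That is the entire argument — it is a one-line consequence once Theorem \ref{aslice} is in hand, so there is no real obstacle here. The only point requiring a word of care is that $\doteq$ denotes equality up to a unit of $\ZZ[t,t^{-1}]$, i.e.\ up to $\pm t^k$; since $(-1)^k = \pm 1$ and $|\pm 1| = 1$, the value $|\Delta_K(-1)|$ is well-defined and equals $|f(-1)\,f((-1)^{-1})| = |f(-1)|^2$, using that $(-1)^{-1} = -1$.

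One could phrase it even more succinctly: the Alexander polynomial of an algebraically slice knot always has $\det(K)$ a perfect square, since $\Delta_K \doteq f(t)f(t^{-1})$ forces $|\Delta_K(-1)| = |f(-1)|^2$, and this is classical. So the corollary is immediate from Theorem \ref{aslice} together with this standard fact about algebraically slice knots. I expect the proof in the paper to be essentially two sentences, possibly just citing that algebraically slice knots have square determinant.
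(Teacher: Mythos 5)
Your argument is correct and is essentially identical to the paper's proof: the paper likewise applies Theorem \ref{aslice} to get $\Delta_K(t)\doteq f(t)f(t^{-1})$ and concludes $\det(K)=\abs{\Delta_K(-1)}=[f(-1)]^2$. Your extra remark about the unit $\pm t^k$ not affecting $\abs{\Delta_K(-1)}$ is a correct (if implicit in the paper) point of care.
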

\begin{proof}  Suppose that $K$ admits a cosmetic crossing. By Theorem \ref{aslice}
$\Delta_K(t)\doteq f(t) f(t^{-1})$,
where $f(t)\in \ZZ[t]$ is a linear polynomial. Thus, if $K$ admits cosmetic crossings we have
 $\det(K)=\abs{\Delta_K(-1)}=[f(-1)]^{2}$.
\end{proof}

\section{Further obstructions: homology of double covers} \label{sec:doublecover}
In this section we derive further obstructions to cosmetic crossings in terms of the homology of the double branched cover
of the knot.
More specifically, we will prove the following.

\begin{theorem}\label{doublecover} Let $K$ be an oriented genus one knot and let $Y_K$  denote the double
cover of $S^3$ branching over $K$.
If $K$ admits a cosmetic crossing, then the homology group $H_1(Y_K)$ is  a finite cyclic group.
\end{theorem}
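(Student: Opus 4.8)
The plan is to use the Seifert matrix description established in the proof of Theorem~\ref{aslice}. By Proposition~\ref{prop:minimum} and Lemma~\ref{lem:essential}, a cosmetic crossing on $K$ is realized by twisting along a non-separating arc on a genus one Seifert surface $S$, and choosing a symplectic basis $\{a_1,a_2\}$ as before we obtain a Seifert matrix
$$V = \begin{pmatrix}a & b \\ b+1 & 0\end{pmatrix}$$
which (by the computation of $\Delta_K$ and $\Delta_{K'}$ forcing the $(2,2)$-entry to vanish) is $S$--equivalent to $V' = \begin{pmatrix}a+\epsilon & b \\ b+1 & 0\end{pmatrix}$ for some $\epsilon\in\{-1,1\}$; this is exactly the content of Corollary~\ref{sequivalent}. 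The key classical fact I would invoke is that $H_1(Y_K)$ is presented by the matrix $V + V^T$, so that $H_1(Y_K) \cong \ZZ^2 / (V+V^T)\ZZ^2$.

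First I would write down $V + V^T = \begin{pmatrix} 2a & 2b+1 \\ 2b+1 & 0\end{pmatrix}$. The order of $H_1(Y_K)$ is $|\det(V+V^T)| = (2b+1)^2 = \det(K)$, consistent with Corollary~\ref{determinant}. To determine the group structure I would compute the Smith normal form: the gcd of the entries is $d_1 = \gcd(2a, 2b+1)$, which divides the odd number $2b+1$ and is therefore odd; then $d_2 = \det(V+V^T)/d_1 = (2b+1)^2/d_1$. So $H_1(Y_K) \cong \ZZ/d_1 \oplus \ZZ/d_2$ with $d_1 \mid d_2$, and this is cyclic precisely when $d_1 = 1$, i.e.\ when $\gcd(2a, 2b+1) = 1$, equivalently $\gcd(a, 2b+1) = 1$.

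The main work is therefore to show that the cosmetic crossing hypothesis forces $\gcd(a, 2b+1) = 1$, and here is where the $S$--equivalence of $V$ and $V'$ (Corollary~\ref{sequivalent}), not just the equality of Alexander polynomials, must be used. Since $V$ and $V'$ are $S$--equivalent, the associated branched double covers are homeomorphic, hence $\ZZ^2/(V+V^T)\ZZ^2 \cong \ZZ^2/(V'+V'^T)\ZZ^2$, where $V'+V'^T = \begin{pmatrix} 2a+2\epsilon & 2b+1 \\ 2b+1 & 0\end{pmatrix}$. Now I would argue by contradiction: suppose an odd prime $p$ divides both $a$ and $2b+1$ (it cannot be $2$ since $2b+1$ is odd). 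Reducing mod $p$, the matrix $V+V^T$ becomes $\begin{pmatrix} 0 & 0 \\ 0 & 0\end{pmatrix}$, so $H_1(Y_K)\otimes \ZZ/p \cong (\ZZ/p)^2$. But $V'+V'^T$ mod $p$ equals $\begin{pmatrix} 2\epsilon & 0 \\ 0 & 0\end{pmatrix}$, which has rank one over $\ZZ/p$ since $2\epsilon = \pm 2$ is a unit mod the odd prime $p$; hence $H_1(Y_{K'})\otimes\ZZ/p \cong \ZZ/p$. This contradicts the isomorphism $H_1(Y_K)\cong H_1(Y_{K'})$ coming from $K\simeq K'$. Therefore no such $p$ exists, $\gcd(a,2b+1)=1$, $d_1 = 1$, and $H_1(Y_K)$ is the finite cyclic group $\ZZ/(2b+1)^2$.

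The step I expect to be the main obstacle is making precise the passage from ``$K$ and $K'$ are isotopic'' (or $S$--equivalent Seifert matrices) to the mod-$p$ rank comparison of $V+V^T$ and $V'+V'^T$: one must be careful that $S$--equivalence genuinely preserves the isomorphism class of $\ZZ^2/(V+V^T)$ (this is standard, since $S$--equivalence moves are realized by congruence and stabilization, both of which preserve the cokernel of $V + V^T$ up to the relevant isomorphism), and that the entries of the matrix $V$ can indeed be taken in the normalized form $\begin{pmatrix}a & b \\ b+1 & 0\end{pmatrix}$ simultaneously for $K$ and with the single-entry change $a \mapsto a+\epsilon$ for $K'$ — but all of this is already supplied by Corollary~\ref{sequivalent}, so the remaining argument is the short linear-algebra computation above.
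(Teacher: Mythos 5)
Your proof is correct and follows essentially the same route as the paper: both reduce to the presentation matrices $V+V^T$ and $V'+(V')^T$ for $H_1(Y_K)\cong H_1(Y_{K'})$ and exploit the fact that an odd prime dividing both $a$ and $2b+1$ cannot divide $a+\epsilon$. The only difference is cosmetic --- the paper pins down the full isomorphism type $\ZZ_d\oplus\ZZ_{(2b+1)^2/d}$ via its Lemma~\ref{abelian} and compares invariant factors, whereas you compare ranks after tensoring with $\ZZ/p$; both yield the same contradiction.
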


To prove Theorem \ref{doublecover} we need the following elementary lemma.
(Here, given $m\in {\ZZ}$ we denote by  ${\ZZ}_m={\ZZ}/{m{\ZZ}}$  the cyclic abelian group of order $\abs{m}$.)

\begin{lemma} \label{abelian} If $H$ denotes the abelian group given by the presentation
  \begin{equation*}
H\cong \left<
\begin{array}{l|l}
c_1, c_2 & 2x c_1+ (2y+1)c_2= 0\\
& (2y+1)c_1= 0
\end{array}
\right> ,
\end{equation*}
then we have
\begin{enumerate}
\item $H\cong 0$, if $y=0$ or $y=-1$.
\item  $H \cong \mathbb{Z}_{d} \oplus \mathbb{Z}_{\frac{{(2y+1)}^2}{d}}$, if   $y \neq 0,\ -1$ and {\rm gcd}$(2x,\ 2y+1) = d$ where $1 \leq  d \leq 2y+1$.
\end{enumerate}
  \end{lemma}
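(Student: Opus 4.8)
The plan is to identify $H$ with the cokernel of its relation matrix and read off the answer from a Smith normal form. Writing the two given relations as the rows of
$$R = \begin{pmatrix} 2x & 2y+1 \\ 2y+1 & 0 \end{pmatrix}$$
(note $R$ is symmetric, so I need not keep track of whether to transpose it), we have $H \cong \ZZ^2 / R\,\ZZ^2$. I will use the standard structure fact that, for a $2\times 2$ integer matrix $R$, one has $\ZZ^2/R\,\ZZ^2 \cong \ZZ_{d_1} \oplus \ZZ_{d_2}$, where $d_1$ is the gcd of the entries of $R$ and, when $\det R \neq 0$, the product $d_1 d_2$ equals $\abs{\det R}$ (the gcd of the $2\times 2$ minors); moreover $d_1 \mid d_2$ automatically. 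For the matrix at hand, $\det R = -(2y+1)^2$ and $d_1 = \gcd(2x,\,2y+1,\,2y+1,\,0) = \gcd(2x,\,2y+1)$.

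For part (1), suppose $y = 0$ or $y = -1$. Then $2y+1 = \pm 1$, so $\det R = -1$, the matrix $R$ is invertible over $\ZZ$, and hence $H \cong \ZZ^2/R\,\ZZ^2 = 0$.

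For part (2), suppose $y \neq 0, -1$, so that $\abs{2y+1} \geq 3$ and $\det R \neq 0$. Put $d \colonequals \gcd(2x,\,2y+1)$; since $d \mid 2y+1$ we get $1 \leq d \leq \abs{2y+1}$. By the fact quoted above, $d_1 = d$ and $d_2 = \abs{\det R}/d = (2y+1)^2/d$, and the divisibility $d \mid (2y+1)^2/d$ that the Smith normal form demands is clear, because $d \mid 2y+1$ forces $d^2 \mid (2y+1)^2$. Therefore $H \cong \ZZ_{d} \oplus \ZZ_{(2y+1)^2/d}$, as asserted.

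The computation presents no real obstacle; if anything, the only points to be careful about are the gcd calculation (made easy here by the fact that one entry of $R$ is $0$, so $d_1$ is simply $\gcd(2x,2y+1)$) and the sign bookkeeping when $y<0$, which is handled by phrasing everything through $\abs{2y+1}$ and $(2y+1)^2$. Should one wish to avoid invoking the structure theorem as a black box, the same conclusion falls out by factoring $R = d\,N$ with $N = \begin{pmatrix} p & q \\ q & 0 \end{pmatrix}$, where $2x = dp$, $2y+1 = dq$, and $\gcd(p,q) = 1$: the matrix $N$ has gcd of entries $1$, so after producing a unit entry by a unimodular column operation coming from $\gcd(p,q)=1$ and then clearing the corresponding row and column, $N$ has Smith normal form $\mathrm{diag}(1, q^2)$; hence $R$ has Smith normal form $\mathrm{diag}(d,\, dq^2)$ with $dq^2 = (2y+1)^2/d$, giving the same splitting.
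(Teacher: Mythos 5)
Your proof is correct and follows essentially the same route as the paper: both compute the Smith normal form of the symmetric presentation matrix $\begin{pmatrix} 2x & 2y+1 \\ 2y+1 & 0 \end{pmatrix}$, you by quoting the determinantal-divisor formula ($d_1 = \gcd$ of entries, $d_1 d_2 = \abs{\det R}$) and the paper by carrying out the unimodular reduction explicitly, which is exactly the computation you sketch in your final paragraph. The only cosmetic difference is that your phrasing via $\abs{2y+1}$ is slightly more careful than the lemma's statement when $y \leq -2$.
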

  \begin{proof}  If $y=0$ or $y=-1$, clearly we have $H\cong 0$.
  Suppose now that $y \neq 0,\ -1$ and {\rm gcd}$(2x,\ 2y+1) = d$ where $1 \leq  d \leq 2y+1$.
  Then there are integers $A$ and $B$ such that $2x = dA,\ 2y+1 = dB$, and $\textrm{gcd}(A,B) = 1$.  Let $\alpha$ and $\beta$ be such that $\alpha A + \beta B = 1$.  Since $ \begin{pmatrix}dA & dB \\ dB & 0\end{pmatrix}$ is a presentation matrix of $H$
  and $\begin{pmatrix}\alpha & \beta \\ -B & A\end{pmatrix}$ is invertible over $\mathbb{Z}$, we get that
$\begin{pmatrix}\alpha & \beta \\ -B & A\end{pmatrix} \begin{pmatrix}dA & dB \\ dB & 0\end{pmatrix} = \begin{pmatrix}d & d \alpha B \\ 0 & -dB^2\end{pmatrix}$ is also a presentation matrix for $H$.  So

\begin{equation*}
H\cong
\left<
\begin{array}{l|l}
c_1,c_2 & dc_1 + d \alpha B c_2 = 0\\
& dB^2 c_2 = 0
\end{array}
\right>
\end{equation*}
Now letting $c_3 = c_1+ \alpha Bc_2$, we have

\begin{equation*}
H \cong
\left<
\begin{array}{l|l}
c_2, c_3 & dc_3 = 0\\
& dB^2 c_2= 0
\end{array}
\right>
\end{equation*}

Hence $H \cong \mathbb{Z}_{d} \oplus \mathbb{Z}_{dB^2}=\mathbb{Z}_{d} \oplus \mathbb{Z}_{\frac{{(2y+1)}^2}{d}}$.
  \end{proof}
\vskip 0,05in

\begin{proof} [Proof of Theorem \ref{doublecover}]
  Suppose that a genus one knot $K$ admits a cosmetic crossing yielding an isotopic knot  $K'$.
The proof of Theorem \ref{aslice} shows that $K$ and $K'$ admit Seifert matrices
of the form

\begin{equation}\label{Eqn:matrix1}V= \left(\ba{cc} a & b \\ b+1 & 0 \ea\right)\mbox{and} \  V'=\left(\ba{cc} a+\epsilon & b \\ b+1 & 0 \ea\right)\end{equation}
respectively, where $a,b \in {\ZZ}$ and $\epsilon=1$ or $-1$ according to whether $C$ is a negative or a positive crossing.
 In particular we have
 \begin{equation}\label{Eqn:matrix2}\Delta_K(t)\doteq \Delta_{K'}(t)\doteq b(b+1)(t^2+1)-(b^2+(b+1)^2).\end{equation}
 Presentation matrices for $H_1(Y_K)$ and $H_1(Y_{K'})$ are given by
 \begin{equation}\label{Eqn:matrix3}V+V^T = \begin{pmatrix}2a & 2b+1\\ 2b+1 & 0\end{pmatrix} \mbox{and} \ V'+  (V')^{T}= \begin{pmatrix} 2a+2\epsilon &2 b+1 \\ 2b+1 & 0\end{pmatrix},\end{equation}
 respectively.
It follows that Lemma \ref{abelian} applies to both $H_1(Y_K)$ and $H_1(Y_{K'})$. By that lemma,
$H_1(Y_K)$ is either cyclic or $H_1(Y_K)\cong\mathbb{Z}_{d} \oplus \mathbb{Z}_{\frac{{(2b+1)}^2}{d}}$,
with   $b \neq 0,\ -1$ and $ {\rm gcd}(2a,\ 2b+1) = d$ where $1 < d \leq 2b+1$.
Similarly, $H_1(Y_{K'})$ is either cyclic or $H_1(Y'_K)\cong\mathbb{Z}_{d'} \oplus \mathbb{Z}_{\frac{{(2b+1)}^2}{d'}}$,
with $ {\rm gcd}(2a+2\epsilon,\ 2b+1) = d'$ where $1 < d' \leq 2b+1$.
Since $K$ and $K'$ are isotopic, we have $H_1(Y_K)\cong H_1(Y_{K'})$. One can easily verify this
can only happen in the case that $\textrm{gcd}(2a,\ 2b+1) = \textrm{gcd}(2a+2\epsilon,\ 2b+1) = 1$ in which case $H_1 (Y_K)$ is cyclic.
 \end{proof}

 It is known that for an algebraically slice knot of genus one every minimal genus surface $S$ contains a metabolizer
 (compare \cite[Theorem 4.2]{livingston}).
 After completing the metabolizer  to a basis of $H_1(S)$ we have a Seifert matrix $V$ as in (\ref{Eqn:matrix1}) above.

 \begin{corollary} \label{matrix} Let $K$ be an oriented, algebraically slice knot of genus one.
 Suppose that a genus one Seifert surface of $K$ contains a metabolizer leading to a Seifert matrix $V$ as in (\ref{Eqn:matrix1})
 so that  $b\neq 0,-1$ and $\emph{gcd}(2a,\ 2b+1) \neq 1$.
 Then $K$ cannot admit a cosmetic crossing.
 \end{corollary}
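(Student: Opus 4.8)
The plan is to derive Corollary \ref{matrix} directly from Theorem \ref{doublecover} together with the structure theory for $H_1(Y_K)$ supplied by Lemma \ref{abelian}. First I would argue by contradiction: suppose $K$ admits a cosmetic crossing. Since $K$ is algebraically slice of genus one and, by hypothesis, possesses a minimal genus Seifert surface containing a metabolizer with associated Seifert matrix $V$ as in \eqref{Eqn:matrix1}, the presentation matrix $V + V^T = \begin{pmatrix} 2a & 2b+1 \\ 2b+1 & 0 \end{pmatrix}$ computes $H_1(Y_K)$. Now I would invoke Lemma \ref{abelian} with $x = a$ and $2y+1 = 2b+1$ (so $y = b$): since we are assuming $b \neq 0, -1$, we are in case (2), whence $H_1(Y_K) \cong \mathbb{Z}_d \oplus \mathbb{Z}_{(2b+1)^2/d}$ where $d = \gcd(2a, 2b+1)$.

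The hypothesis $\gcd(2a, 2b+1) \neq 1$ then means $d > 1$, so that $H_1(Y_K) \cong \mathbb{Z}_d \oplus \mathbb{Z}_{(2b+1)^2/d}$ is \emph{not} cyclic: indeed, both summands are nontrivial (the second has order $(2b+1)^2/d \geq (2b+1) > 1$ since $d \le 2b+1$ and $|2b+1| \geq 3$), and $d$ divides $(2b+1)^2/d$ only in degenerate situations, but more to the point $\gcd(d, (2b+1)^2/d)$ shares the prime divisor of $d$ with $(2b+1)^2/d$, so the group has a non-cyclic $p$-part for any prime $p \mid d$. Hence $H_1(Y_K)$ fails to be finite cyclic. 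But Theorem \ref{doublecover} asserts that if $K$ admits a cosmetic crossing then $H_1(Y_K)$ \emph{is} finite cyclic — a contradiction. Therefore $K$ admits no cosmetic crossing.

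I should take a little care at one point: Theorem \ref{doublecover} is proved by producing \emph{some} Seifert matrix of the form \eqref{Eqn:matrix1}, whereas Corollary \ref{matrix} hypothesizes conditions on a \emph{given} metabolizing Seifert matrix. The cleanest route is simply to observe that $H_1(Y_K)$ is an isotopy invariant of $K$ (it is the homology of the double branched cover), so it does not matter which Seifert surface or which basis we use to compute it: the invariant $H_1(Y_K)$ computed from the hypothesized matrix $V$ must agree with the one that Theorem \ref{doublecover} constrains, so the conclusion of Theorem \ref{doublecover} applies verbatim. Thus the argument is essentially bookkeeping once the right input is identified.

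The only genuinely substantive step is the elementary group-theory observation that $d > 1$ forces $\mathbb{Z}_d \oplus \mathbb{Z}_{(2b+1)^2/d}$ to be non-cyclic, which I expect to be the ``main obstacle'' only in the weak sense of needing a clean one-line justification: writing $(2b+1) = dB$ with $\gcd$-considerations, the second factor is $dB^2$, and since $d \mid dB^2$ and $d>1$ the group $\mathbb{Z}_d \oplus \mathbb{Z}_{dB^2}$ has a subgroup isomorphic to $\mathbb{Z}_d \oplus \mathbb{Z}_d$ (namely generated by the order-$d$ elements of each factor), hence is not cyclic. With that noted, the proof is complete.
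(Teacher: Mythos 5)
Your proof is correct and follows essentially the same route as the paper: compute $H_1(Y_K)$ from the presentation matrix $V+V^T$, apply Lemma \ref{abelian} with $b\neq 0,-1$ to get $H_1(Y_K)\cong \mathbb{Z}_d\oplus\mathbb{Z}_{(2b+1)^2/d}$, note this is non-cyclic when $d=\gcd(2a,2b+1)>1$, and conclude via Theorem \ref{doublecover}. Your added remarks (that $H_1(Y_K)$ is independent of the chosen Seifert surface, and the explicit $\mathbb{Z}_d\oplus\mathbb{Z}_d$ subgroup witnessing non-cyclicity) are just careful elaborations of steps the paper leaves implicit.
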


\begin{proof} Let $d=\textrm{gcd}(2a,\ 2b+1) $. As in the proof of Theorem \ref{doublecover},
 we use Lemma \ref{abelian}  to conclude that  $H_1(Y_K)\cong\mathbb{Z}_{d} \oplus \mathbb{Z}_{\frac{{(2b+1)}^2}{d}}$
 and hence is non-cyclic unless $d=1$. Now the conclusion follows by Theorem \ref{doublecover}.
  \end{proof}

 Theorems \ref{aslice} and \ref{doublecover}  immediately  yield Theorem
 \ref{general} stated in the introduction.

 \section{$S$--equivalence of Seifert matrices}\label{Section:S_equivalence}
 We begin by recalling the notion of $S$-equivalence.

\begin{define} \label{defi:sequivalent} We say that an integral square matrix $V$ is a \emph{Seifert matrix} if $\det(V-V^T)=1$.
We say that two Seifert matrices are \emph{$S$--equivalent} if they are related by a finite sequence of the following moves or their inverses:
\begin{enumerate}
\item replacing $V$ by $PVP^T$, where $P$ is an integral unimodular matrix,
\item column expansion, where we replace  an $n \times n$ Seifert matrix $V$ with an $(n+2) \times (n+2)$ matrix of the form:
\[\left(\begin{array}{ccc|cc}
    & & &  0 & 0\\
    & V & & \vdots & \vdots\\
    & & & 0 & 0\\ \hline
    u_1 & \cdots & u_n & 0 & 0\\
    0 & \cdots & 0 & 1 & 0
  \end{array}\right),\]
where $u_1,\dots,u_n \in \ZZ$,
\item a row expansion, which is defined analogously to the column expansion, with the r\^{o}les of rows and columns reversed.
\end{enumerate}
\qed\end{define}

Note that $W$ is a row expansion of $V$ if and only if $W^T$ is a column expansion of $V^T$.
In the following, given two Seifert matrices $V$ and $W$ we write $V\sim W$ if they are $S$--equivalent, and we write $V \approx W$ if they are congruent.

The proof of Theorem \ref{aslice} immediately gives Corollary  \ref{sequivalent} stated in the introduction. This in turn leads to the following question.

\begin{question}\label{qu:sequivalent}
Let $a, b$ and $d$ be integers and $\epsilon\in \{-1,1\}$. Are the matrices
\[ \bp a&b \\ b+1&d\ep\mbox{ and }
  \bp a+\epsilon&b \\ b+1&d\ep \]
$S$--equivalent?
\end{question}

Now we focus on Question \ref{qu:sequivalent}. A first trivial observation is that if $d=0$ and $b=0$, then the two given matrices are congruent and, in particular, $S$--equivalent.
We therefore restrict ourselves to matrices with non--zero determinant, or equivalently, to knots of genus one such that the Alexander polynomial $\Delta_K(t)=\det(V-tV^T)$ is non--trivial.

\subsection{Knots with a unique minimal genus Seifert surface} In this subsection we prove an auxiliary algebraic result about congruences  of
Seifert matrices.
As a first application of it we prove the nugatory crossing conjecture for  genus one knots with non--trivial Alexander polynomial and with a minimal genus Seifert surface which, up to isotopy, is \emph{unique}.
\begin{prop}\label{prop:general_congruences}
Suppose that the matrices
 \[ \bp a&b \\ b+1&0\ep\mbox{ and }
  \bp c&b \\ b+1&0\ep, \]
where $a,b,c \in \ZZ$, are congruent over $\ZZ$.  Then there is an integer $n$ such that $a + n(2b+1) = c$.
\end{prop}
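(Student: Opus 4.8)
The plan is to exploit the very rigid shape of these matrices under congruence. Suppose $P = \bp p & q \\ r & s \ep \in GL_2(\ZZ)$ satisfies
\[ P \bp a & b \\ b+1 & 0 \ep P^T = \bp c & b \\ b+1 & 0 \ep. \]
First I would extract what the $(2,2)$-entry being zero forces. Writing out the $(2,2)$-entry of the left-hand side gives a quadratic relation in $r,s$ of the form $a r^2 + (2b+1) rs = 0$, i.e. $r\bigl(ar + (2b+1)s\bigr) = 0$. So either $r = 0$, or $ar = -(2b+1)s$. The second branch, combined with $\det(V - V^T) = 1$ (equivalently the antisymmetric part being $\bp 0 & -(2b+1)/? \ep$—here $\det(V-V^T) = (b - (b+1))^2$ forces $b-c$ type normalization; in our normalization $\det(V-V^T)=1$ means the off-diagonal difference is $\pm 1$, but with entries $b$ and $b+1$ the antisymmetric part is $\bp 0 & -1 \\ 1 & 0\ep$), should be ruled out or shown to reduce to the first case. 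Indeed, since $V - V^T = \bp 0 & -1 \\ 1 & 0 \ep$ and congruence preserves this antisymmetric form up to $P(\cdot)P^T$, we get $P\bp 0 & -1 \\ 1 & 0\ep P^T = \bp 0 & -1 \\ 1 & 0\ep$, which says exactly $\det P = 1$, i.e. $ps - qr = 1$. That is the key auxiliary fact I would establish first: $P \in SL_2(\ZZ)$.

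Next I would use the remaining entries. From the $(1,2)$ and $(2,1)$ entries of the congruence one reads off linear relations among $p,q,r,s$; combined with $ps - qr = 1$ these should pin down $r$ and $s$ fairly tightly. The cleanest route: the $(2,1)$ entry of $P V P^T$ equals $(b+1) = r(ap + (b+1)q) \cdot$(something) $+ \dots$; rather than grind, I expect that matching the antisymmetric part already gave $\det P = 1$, and then matching the $(2,2)$-entry gives $r\bigl(ar + (2b+1)s\bigr) = 0$. If $r \neq 0$ then $ar = -(2b+1)s$; but also $\gcd(r,s)=1$ (as a column of an $SL_2$ matrix), so $r \mid (2b+1)s$ forces $r \mid 2b+1$, and then $s \mid a$ after writing $2b+1 = r k$. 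Pushing this through, one finds the $(2,2)$ branch with $r\neq 0$ forces $s = 0$, whence $r = \pm 1$, and a short computation of the $(1,1)$-entry then yields $c = a \pm$ a multiple of $(2b+1)$ plus possibly $\pm$ something — I would check this degenerate case gives $c \equiv a \pmod{2b+1}$ as well, or rule it out. In the main branch $r = 0$: then $\det P = ps = 1$ so $p = s = \pm 1$, and the $(1,1)$-entry of $PVP^T$ is $p^2 a + 2pq(b+1) + q^2\cdot 0 = a + 2pq(b+1)$... wait, I must use $V$ not its symmetrization, so the $(1,1)$ entry is $p(pa + q(b+1)) + q(p b + q\cdot 0) = p^2 a + pq(b+1) + pqb = p^2 a + pq(2b+1)$. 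With $p = \pm 1$ this is $a \pm q(2b+1)$, which must equal $c$. Hence $c = a + n(2b+1)$ with $n = \pm q$, as claimed.

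So the skeleton is: (1) the antisymmetric part of the congruence forces $\det P = 1$; (2) the vanishing $(2,2)$-entry forces $r\bigl(ar+(2b+1)s\bigr)=0$; (3) in the generic branch $r=0$ this plus $\det P = 1$ gives $p=s=\pm1$ and reading the $(1,1)$-entry yields $c = a + (\pm q)(2b+1)$; (4) the degenerate branch $r \neq 0$ is handled by a divisibility/$\gcd$ argument showing $s = 0$, $r = \pm 1$, and then again a linear computation of the $(1,1)$-entry gives $c - a \in (2b+1)\ZZ$. The main obstacle I anticipate is case (4): making sure the branch $ar = -(2b+1)s$ with $r \neq 0$ either cannot produce a genuinely new congruence or still satisfies $c \equiv a \pmod{2b+1}$. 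I expect it to collapse cleanly because a column of an $SL_2(\ZZ)$ matrix is primitive, so $\gcd(r,s) = 1$ turns $ar = -(2b+1)s$ into $r \mid 2b+1$ and $s \mid a$, and then the determinant condition constrains things enough to finish; but if $b$ and $a$ share factors with $2b+1$ this requires a little care, and that bookkeeping is where I would be most careful.
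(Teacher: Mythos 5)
Your overall strategy coincides with the paper's: expand $PVP^T=V'$ entry by entry and split on whether the bottom\nobreakdash-left entry $r$ of $P$ vanishes. Your opening observation that matching the antisymmetric parts forces $\det P=1$ is correct and is a slightly slicker way to get $ps-qr=1$ than the paper uses, and your $r=0$ branch is complete: $ps=1$ gives $p=s=\pm1$, and the $(1,1)$-entry $p^2a+pq(2b+1)=a+pq(2b+1)$ yields $n=pq$.

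The gap is the branch $r\neq 0$, where the $(2,2)$-entry gives $ar+(2b+1)s=0$. You claim that primitivity of $(r,s)$ together with $r\mid 2b+1$ and $s\mid a$ ``forces $s=0$, whence $r=\pm1$.'' That is false: take $a=-(2b+1)$ and $r=s=1$; then $ar+(2b+1)s=0$, $\gcd(r,s)=1$, and $s\neq 0$. So the $(2,2)$-entry and the determinant alone do not collapse this branch, and the divisibility bookkeeping you propose cannot finish it. What is missing is the information in the off-diagonal entries, which you never write down in this branch. Computing them kills it at once: substituting $ar=-(2b+1)s$, the $(2,1)$-entry of $PVP^T$ is $\bigl(ar+(b+1)s\bigr)p+brq=(-bs)p+brq=-b(ps-qr)=-b$, which must equal $b+1$, forcing $2b=-1$ --- impossible for $b\in\ZZ$. (Equivalently the $(1,2)$-entry gives $-(b+1)=b$.) This is exactly the contradiction the paper reaches, in the form $b^2=(b+1)^2$, by matching both off-diagonal entries in its subcases $a=0$ and $a\neq 0$. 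With that one computation added, your argument is complete.
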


Before we prove Proposition \ref{prop:general_congruences} we explain one of  its consequences.
If $K$ is a knot with, up to isotopy, a unique minimal genus Seifert surface, then the Seifert matrix corresponding to that surface only depends on the choice of basis for the first homology.  Put differently, the integral congruence class of the Seifert matrix corresponding to the unique minimal genus Seifert surface is an invariant of the knot $K$.  Assuming Proposition \ref{prop:general_congruences}, we have the following theorem.

\begin{theorem}\label{Thm:unique_minimal_genus}
Let $K$ be an oriented genus one knot with a unique minimal genus Seifert surface, which admits a cosmetic crossing.  Then $\Delta_K(t) \doteq 1$.
\end{theorem}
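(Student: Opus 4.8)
The plan is to combine Corollary \ref{sequivalent} with Proposition \ref{prop:general_congruences}, using the uniqueness hypothesis to upgrade the $S$-equivalence supplied by Corollary \ref{sequivalent} into an honest integral congruence. Suppose, for contradiction, that $K$ has non-trivial Alexander polynomial and admits a cosmetic crossing. By Corollary \ref{sequivalent}, $K$ has a Seifert matrix
\[ V = \begin{pmatrix} a & b \\ b+1 & 0 \end{pmatrix} \]
with $b \neq 0, -1$ (otherwise $\Delta_K(t) \doteq 1$ by \eqref{Eqn:matrix2}), and this $V$ is $S$-equivalent to $V' = \begin{pmatrix} a+\epsilon & b \\ b+1 & 0 \end{pmatrix}$ for some $\epsilon \in \{-1,1\}$.

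The key step is to observe that $V$ and $V'$ are not merely $S$-equivalent but actually \emph{congruent} over $\ZZ$. This is where the uniqueness hypothesis enters: the cosmetic crossing change carries the minimal genus Seifert surface $S$ of $K$ to a minimal genus Seifert surface $S'$ of $K'$ (Proposition \ref{prop:minimum}), and since $K'$ is isotopic to $K$, the surface $S'$ is carried to a minimal genus Seifert surface of $K$. By the uniqueness assumption, this surface is isotopic to $S$, so the Seifert form of $K$ read off from $S'$ (with respect to the image of the chosen basis) is congruent over $\ZZ$ to the Seifert form read off from $S$. In other words $V' \approx V$. Then Proposition \ref{prop:general_congruences}, applied with $c = a + \epsilon$, yields an integer $n$ with $a + n(2b+1) = a + \epsilon$, i.e. $n(2b+1) = \epsilon = \pm 1$. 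Hence $2b+1 = \pm 1$, forcing $b = 0$ or $b = -1$ — a contradiction. Therefore $b \in \{0, -1\}$, and \eqref{Eqn:matrix2} gives $\Delta_K(t) \doteq 1$.

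I expect the main obstacle to be making the congruence step fully rigorous: one must track carefully that the diffeomorphism realizing the isotopy, together with the isotopy of $S'$ onto $S$, induces an isomorphism on $H_1$ under which the two Seifert forms correspond, and that the ambiguity in the choice of basis is exactly a unimodular congruence. Once this is pinned down the argument is short. One should also double-check the edge case where $\Delta_K(t) \doteq 1$ already holds (so the conclusion is immediate) to make sure the reduction to $b \neq 0, -1$ is legitimate, and confirm via \eqref{Eqn:matrix2} that $b \in \{0,-1\}$ is equivalent to trivial Alexander polynomial.
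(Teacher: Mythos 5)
Your proposal is correct and follows essentially the same route as the paper: the uniqueness hypothesis upgrades the $S$-equivalence from Corollary \ref{sequivalent} to an integral congruence (this is exactly the point of the paper's discussion preceding the theorem), and Proposition \ref{prop:general_congruences} then forces $n(2b+1)=\pm 1$, hence $b\in\{0,-1\}$ and $\Delta_K(t)\doteq 1$. If anything, you are slightly more careful than the paper's write-up, which mentions only the case $b=0$ and omits the equally harmless case $b=-1$.
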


\begin{proof}
Let $K$ be a genus one knot with a unique minimal genus Seifert surface, which admits a cosmetic crossing.
It follows from Corollary  \ref{sequivalent} and from the discussion preceding the statement of this theorem that
$K$
 admits a Seifert matrix
 $\begin{pmatrix}a & b \\ b+1 & 0\end{pmatrix}$
which is $S$--equivalent to
$\begin{pmatrix}a +\epsilon & b \\ b+1 & 0\end{pmatrix}$
for some $\epsilon\in \{-1,1\}$.
 For $b\neq 0$, Proposition \ref{prop:general_congruences} precludes such congruences from being possible.  If $b=0$, then the Alexander polynomial is $1$.
\end{proof}

We now proceed with the proof of  Proposition \ref{prop:general_congruences}.
\begin{proof}[Proof of  Proposition \ref{prop:general_congruences}]
To begin, we suppose that an integral unimodular congruence exists as hypothesized. That is, suppose that there exist integers $x,y,z,t$ such that
\[\left(\ba{cc} x & y \\ z & t \ea \right)\left(\ba{cc} a & b \\ b+1 & 0 \ea \right)\left(\ba{cc} x & z \\ y & t \ea \right) = \left(\ba{cc} c & b \\ b+1 & 0 \ea \right).\]
The left hand side multiplies out to give
\begin{equation}\label{Eqn:matrix4} \left(\ba{cc} x^2a+xy(2b+1) & xza+yz(b+1)+xtb \\ xza + xt(b+1) +zyb  & z^2a+zt(2b+1) \ea\right).\end{equation}
Solving the bottom right entry equal to zero implies that either $z=0$, or (for $a \neq 0$) $z = -t(2b+1)/a$.  We required that $z$ was an integer, so it must also be the case that $t$ is such that $a$ divides $t(2b+1)$, but we shall not need this.  In the case that $a = 0$ and $z \neq 0$, we then have $t=0$.

First, if $z=0$, then (\ref{Eqn:matrix4}) becomes
\[\left(\ba{cc}x^2a +(2b+1)xy & xtb \\ xt(b+1) & 0 \ea\right).\]
We require that $x=t=1$ or $x=t=-1$ for the top right and bottom left entries to be correct.  But then setting $n=xy$ proves the proposition in this case.

Next, suppose $z\ne 0$ and $a=0$.  Then $t=0$, and (\ref{Eqn:matrix4}) becomes
\[\left(\ba{cc} (2b+1)xy & yz(b+1) \\ zyb  & 0 \ea\right).\]
The equations $zyb = b+1$ and $zy(b+1) = b$ imply that $b^2 = (b+1)^2$, which has no integral solutions.

Now in the general case, i.e. $z\ne 0$ and $a\ne 0$, we substitute $z = -t(2b+1)/a$ into (\ref{Eqn:matrix4}), to yield
\[\left(\ba{cc} xk & -t(b+1)k/a \\ -tbk/a & 0 \ea\right),\]
where $k := ax+y(2b+1)$.  Setting this equal to $$\left(\ba{cc} c & b \\ b+1 & 0 \ea \right),$$ the equations
\[-t(b+1)k/a = b\]
and
\[-tbk/a = b+1\]
imply again that $(b+1)^2 = b^2$.  Since this does not have integral solutions, we also rule out this case.  The only congruences possible are therefore those claimed, which occur when $z=0$ and $x=t= \pm 1$.  This completes the proof of Proposition \ref{prop:general_congruences}.
\end{proof}

\subsection{Other algebraically slice genus one knots}
In this subsection we will show that, in general, the answer to Question \ref{qu:sequivalent} can be affirmative, even for matrices with non--zero determinant. This implies that the  $S$--equivalence class of the Seifert matrix of a genus one knot with non--trivial Alexander polynomial does not in general contain enough information to resolve the nugatory crossing conjecture.
In fact we will prove the following proposition:

\begin{prop}\label{prop:sequivalent}
For any $b>4$ such that $b \equiv 0 \text{ or } 2\mod3$, there exists an $a\in \ZZ$ such that
$$V = \begin{pmatrix}a & b \\ b+1 & 0\end{pmatrix} \ \  \ \rm{and}  \ \ \   V'= \begin{pmatrix}a+1& b \\ b+1 & 0\end{pmatrix}$$are $S$--equivalent.
\end{prop}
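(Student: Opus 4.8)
The plan is to reduce the claim to a number–theoretic congruence that the hypotheses on $b$ are tailored to solve. Two preliminary remarks. First, since $\bp a&b\\b+1&0\ep$ is congruent to $\bp a+n(2b+1)&b\\b+1&0\ep$ via $\bp 1&n\\0&1\ep$ for every $n$, whether $V\sim V'$ holds depends only on $a$ modulo $2b+1$, so it suffices to find one good residue. Second, $V$ and $V'$ have the same Alexander polynomial $\Delta(t)\doteq b(b+1)(t^2+1)-(2b^2+2b+1)t= f(t)f(t^{-1})$ with $f(t)=b-(b+1)t$, independent of $a$, so there is no Alexander–polynomial obstruction; and the hypothesis $b\equiv0,2\bmod3$ is equivalent to $3\mid b(b+1)$, hence to $3\nmid 2b+1$, hence to $\Delta(t)\equiv -t\bmod 3$ --- so under this hypothesis $\Delta$ is coprime to $3$ in $\Lambda:=\ZZ[t^{\pm1}]$, the Alexander module carries no $3$--torsion, and every prime dividing $2b+1$ is $\geq 5$.

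By Trotter's theorem \cite{trotter}, $V\sim V'$ if and only if the Blanchfield pairings of the Alexander modules $A_a=\Lambda^2/(V-tV^T)$ and $A_{a+1}$ are isometric. I would first pick $a$ with $\gcd\big(a(a+1),2b+1\big)=1$ (possible since all prime divisors of $2b+1$ are $\geq5$); for such $a$ both modules are cyclic, $\cong \Lambda/(\Delta)$, and the Blanchfield pairing on each is given, via $(x,y)\mapsto g_a\, x\bar y/\Delta$, by a unit $g_a\in(\Lambda/(\Delta))^{\times}$ with the appropriate Hermitian symmetry. Next I would compute $g_a$ directly from $V$, using $(V-tV^T)^{-1}=\tfrac{1}{t\Delta}\bp 0&-f\\ t\bar f& a(1-t)\ep$, and then look for $a$ with $g_a\equiv w\bar w\,g_{a+1}\pmod{\Delta}$ for some unit $w\in(\Lambda/(\Delta))^{\times}$, which is exactly the condition for the two pairings to be isometric. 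This is where the hypotheses bite: $3\nmid2b+1$ makes $\Lambda/(\Delta)$ and its subgroup of norms $\{w\bar w:w\in(\Lambda/(\Delta))^\times\}$ transparent enough to analyse, while $b>4$ leaves enough room to solve the resulting congruence for $a$. An alternative route that avoids quoting Trotter's theorem is to perform a single column expansion on each of $V$ and $V'$, obtaining $4\times4$ Seifert matrices whose two new generators carry free integer parameters, and then to write down an explicit integral unimodular $4\times4$ matrix $P$ realising a congruence between them; matching entries and using the parameters reduces to the same residual congruence in $a$.

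The main obstacle is precisely this residual congruence --- equivalently, pinning down the norm subgroup of $(\Lambda/(\Delta))^\times$ and checking that $g_a/g_{a+1}$ lands in it for a suitable $a$ --- together with confirming that $3\mid b(b+1)$ and $b>4$ are exactly what make it solvable. The surrounding steps (inverting $V-tV^T$, the stabilisation bookkeeping, and --- importantly --- verifying that the unit one writes down genuinely comes from an integral Seifert matrix of the prescribed shape, not merely from an abstract Hermitian form) are routine but must be handled carefully; the hands-on route is attractive here because it keeps the integrality constraint in view throughout. I expect the finished proof to commit to one of these two routes and carry out the corresponding explicit computation.
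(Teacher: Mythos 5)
Your proposal is a plan rather than a proof, and the step you defer --- ``the residual congruence'' --- is precisely where the entire content of the proposition lives. What is needed, and what you never produce, is a single explicit $S$--equivalence between two matrices of the given shape that are \emph{not} integrally congruent (Proposition \ref{prop:general_congruences} of the paper shows congruence only moves $a$ by multiples of $2b+1$, so some genuinely non-congruence move is unavoidable). The paper supplies exactly this in an elementary lemma: by an explicit chain of row/column expansions and unimodular congruences, $\begin{pmatrix} a & b \\ b+1 & 0\end{pmatrix}$ is $S$--equivalent to $\begin{pmatrix} ab^2 & b \\ b+1 & 0\end{pmatrix}$. Combined with your (correct) observation that the problem only depends on $a$ modulo $2b+1$, the proof then reduces to solving $a(1-b^2)\equiv -1 \pmod{2b+1}$, i.e.\ $a+1 = ab^2 + k(2b+1)$; this is solvable exactly when $1-b^2=(1-b)(1+b)$ is invertible mod $2b+1$. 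Since $\gcd(b+1,2b+1)=1$ always and $\gcd(b-1,2b+1)=\gcd(b-1,3)$, the hypothesis $b\equiv 0$ or $2 \bmod 3$ is precisely what is needed --- a much more pedestrian role for that hypothesis than the $3$--torsion/norm-subgroup considerations you speculate about.

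Your route (A) via Trotter's identification of $S$--equivalence with isometry of Blanchfield pairings could in principle work, and your route (B) via an explicit stabilized congruence is close in spirit to what the paper actually does; but in either case the hard step (determining the norm subgroup of $(\Lambda/(\Delta))^\times$ and exhibiting $g_a/g_{a+1}$ as a norm, or writing down the $4\times 4$ unimodular matrix $P$) is left entirely open, and you acknowledge as much. Without carrying out one of these computations --- or finding the multiplication-by-$b^2$ move that the paper isolates --- the argument does not close.
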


 Since any Seifert matrix $V$ can be realized as the Seifert matrix of a knot it follows  that the $S$--equivalence class of Seifert matrices cannot resolve the nugatory crossing conjecture for genus one knots with non--trivial Alexander polynomial.

We will need the following elementary lemma to prove Proposition \ref{prop:sequivalent}.

\begin{lemma}
Let $a,b,k\in \ZZ$, then the matrices
\[  \bp a&b \\ b+1&0\ep, \bp a+k(2b+1)&b \\ b+1&0\ep  \mbox{ and }  \bp ab^2&b \\ b+1&0\ep \]
are $S$--equivalent.
\end{lemma}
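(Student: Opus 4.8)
The plan is to reduce to two special cases and then use transitivity of $S$--equivalence. Since congruence (move (1) of Definition~\ref{defi:sequivalent}) is one of the generating moves, it suffices to prove
\[ \bp a&b\\b+1&0\ep \;\approx\; \bp a+k(2b+1)&b\\b+1&0\ep \qquad\text{and}\qquad \bp a&b\\b+1&0\ep \;\sim\; \bp ab^2&b\\b+1&0\ep , \]
since combining these with transitivity shows all three matrices in the statement are mutually $S$--equivalent. The first case is immediate: taking the unimodular matrix $P=\bp 1&k\\0&1\ep$ and computing $P\,\bp a&b\\b+1&0\ep\,P^T$ yields $\bp a+k(2b+1)&b\\b+1&0\ep$ — this is precisely the $z=0$, $x=t=1$ specialization of equation~(\ref{Eqn:matrix4}) in the proof of Proposition~\ref{prop:general_congruences}, so it is essentially free.

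For the second case one cannot stay with $2\times2$ matrices: by Proposition~\ref{prop:general_congruences}, $\bp a&b\\b+1&0\ep$ and $\bp ab^2&b\\b+1&0\ep$ are congruent only when $a(b^2-1)\equiv 0\pmod{2b+1}$, and this fails in general because $\gcd(2b+1,b^2-1)$ divides $3$. So an expansion move must genuinely be used. My plan is: starting from $V=\bp a&b\\b+1&0\ep$, perform one expansion (say a column expansion, with the free data $u_1,u_2$ chosen conveniently) to obtain a $4\times4$ Seifert matrix $M$; then apply a carefully chosen integral congruence to $M$ whose net effect on the first generator is to replace $e_1$ by $b\,e_1$ plus a correction supported on $e_2$ and the two new generators; then apply the inverse expansion to return to a $2\times2$ matrix. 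Scaling the first generator by $b$ multiplies its self--linking by $b^2$, which produces the desired factor $b^2$ in the $(1,1)$ entry; the other basis vectors are rechosen so the change of basis is unimodular, and the by--products of the congruence in the remaining entries are arranged to be either zero or multiples of $2b+1$. After contracting we are left with $\bp ab^2+m(2b+1)&b\\b+1&0\ep$ for some $m\in\ZZ$, and a final application of the first case removes the term $m(2b+1)$.

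The main obstacle is the bookkeeping in that middle step. The inverse of a column (resp.\ row) expansion can be applied only to a matrix already sitting in the rigid block pattern of a column (resp.\ row) expansion, so the congruence must be chosen so that after rescaling $e_1$ \emph{every} entry in the two new rows and columns — and in particular the radical--type entries that are forced to be exactly $0$ or $1$ — falls back into that pattern. This amounts to a small system of linear and divisibility conditions on the entries of the congruence matrix; the freedom in the expansion data $(u_1,u_2)$, together with the fact that (thanks to the first case) the $(1,1)$ entry need only be correct modulo $2b+1$, is what makes the system solvable over $\ZZ$. I expect that a single expansion suffices with the right choices; if the Diophantine conditions turn out to be stubborn, performing two expansions gives extra room and the same strategy still goes through.
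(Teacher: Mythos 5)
Your overall architecture is exactly the paper's: handle the first pair by a single unimodular congruence, and handle the third matrix by expanding, applying congruences that in effect rescale a generator by $b$, and contracting. Your first step is complete and correct: with $P=\bp 1&k\\0&1\ep$ the congruence $PVP^T$ gives $\bp a+k(2b+1)&b\\b+1&0\ep$, and your observation that a plain $2\times 2$ congruence cannot reach $\bp ab^2&b\\b+1&0\ep$ in general (since $\gcd(2b+1,b^2-1)$ divides $3$) is a correct and worthwhile sanity check that an expansion is genuinely needed.

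However, the second half of your argument is a plan rather than a proof, and that half is where the entire content of the lemma lives. You never exhibit the integral congruence of the expanded $4\times 4$ matrix, nor verify that after rescaling the relevant generator by $b$ the result lands back in the rigid block pattern required to invert an expansion; you explicitly defer this (``I expect that a single expansion suffices \dots if the Diophantine conditions turn out to be stubborn, performing two expansions gives extra room''). That is a genuine gap: the solvability of your ``small system of linear and divisibility conditions'' is precisely what must be checked, and nothing in your write-up rules out an obstruction. The paper closes this gap by writing out an explicit chain: a single row expansion of $\bp a&b\\b+1&0\ep$ with expansion data $(1,0)$, followed by a sequence of unimodular basis changes that first clear the off-diagonal $b$ using the new generator, then replace a generator by $b$ times itself plus corrections (producing the $ab^2$ entry), and finally a contraction that lands directly on $\bp ab^2&b\\b+1&0\ep$ --- with no leftover multiple of $2b+1$ to absorb, so your fallback to the first case is not even needed. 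Until you produce such an explicit chain (or an equally concrete existence argument for the congruence you posit), the lemma is not proved.
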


\begin{proof}
It is obvious that the first two matrices are congruent. It remains to show that the first and the third matrix are $S$--equivalent.
This follows immediately from the following sequence of $S$--equivalences:
\[ \ba{rll} & \bp a&b \\ b+1&0\ep \Rightarrow \bp a&b& 1&0 \\ b+1&0&0&0 \\ 0&0&0&1 \\ 0&0&0&0 \ep
\Rightarrow \bp a&0&1&0 \\ b+1&0&0&-b  \\ 0&0&0&1 \\ 0&0&0&0 \ep
\\ & \\
\Rightarrow&
\bp a&0&1 &0 \\ 0&0&0&0 \\ 0&1&0&0 \\ b+1&-b&0&0 \ep
\Rightarrow \bp a&0&1&0 \\ 0&0&0&0 \\ 1&1&0&0 \\ 1&-b&0&0 \ep
\Rightarrow \bp a&ab&1&0 \\ ab&ab^2&b&0 \\ 0&1+b&0&0 \\ 1&0&0&0 \ep
\\ & \\
\Rightarrow& \bp 0&1+b&0&0 \\ b&ab^2&ba&0 \\ 1&ab&a&0 \\ 0&0&1&0\ep
\Rightarrow \bp 0&1+b \\ b&ab^2 \ep \Rightarrow \bp ab^2 & b\\ b+1&0\ep.\ea \]
\end{proof}

Using this lemma we can now prove the proposition:

\begin{proof}[Proof of Proposition \ref{prop:sequivalent}]
Let $b>4$ and such that $b \equiv 0 \text{ or } 2\mod3$. It is then straight--forward to see $1+b$ is coprime to $2(b+1)-1=2b+1$ and that
$b-1$ is coprime to $2(b-1)+3$. In particular $1-b^2=(1-b)(1+b)$ is coprime to $2b+1$.
We can therefore find an $a\in \ZZ$ such that
\[ a(1-b^2)\equiv -1\mod (2b+1),\]
by the Chinese Remainder Theorem.  Put differently, we can find a $k\in \ZZ$ such that
\[ a+1=ab^2+k(2b+1).\]
It follows from the above lemma that
\[   \bp a&b \\ b+1&0\ep\mbox{ and } \bp a+1&b \\ b+1&0\ep \]
are $S$--equivalent.
 \end{proof}

\section{Low crossing knots} \label{sec:lowcrossings}
In this section we combine Theorem \ref{general} and  Corollary \ref{sequivalent}
with the following result  of Trotter \cite{trotter}
to prove the nugatory crossing conjecture for all genus one knots with up to 12 crossings.

\begin{theorem} \cite[Corollary~4.7]{trotter}\label{thm:trotter_congruence}
Let $V$ be a Seifert matrix with $|\det(V)|$ a prime or $1$.  Then any matrix which is $S$-equivalent to $V$ is congruent to $V$ over $\ZZ$.
\end{theorem}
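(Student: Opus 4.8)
The plan is to obtain this as a consequence of the standard algebraic theory of $S$--equivalence, which is precisely how Trotter \cite{trotter} proves it; accordingly I describe the architecture of the argument rather than carry out every step.

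\emph{Reducing to matrices of equal size.} Since $\abs{\det V}$ is $1$ or a prime, $\det V\neq 0$, so $V$ is nonsingular; write $n$ for its size. For a nonsingular $n\times n$ Seifert matrix the Laurent polynomial $\det(V-tV^{T})$ has nonzero constant coefficient $\det V$ and nonzero leading coefficient $(-1)^{n}\det V$, hence its breadth is exactly $n$. A congruence leaves $\det(V-tV^{T})$ unchanged and a row or column expansion multiplies it by $\pm t^{\pm1}$, so the breadth is an $S$--equivalence invariant. Every Seifert matrix is $S$--equivalent to a nonsingular one (apply inverse expansions until no longer possible), so it suffices to prove: if $V$ and $W$ are nonsingular $n\times n$ Seifert matrices with $V\sim W$, then $V\approx W$. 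In the application of the theorem in Section~\ref{sec:lowcrossings} this reduction is unnecessary, since there one compares the two explicit $2\times 2$ matrices of Corollary~\ref{sequivalent}.

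\emph{Passing to the Blanchfield pairing.} For nonsingular $V$ the Alexander module $A_{V}:=\mathrm{coker}\bigl(tV-V^{T}\colon\ZZ[t,t^{-1}]^{n}\to\ZZ[t,t^{-1}]^{n}\bigr)$ is free of rank $n$ over $\ZZ$ (since $\det(V-V^{T})=1$ the cokernel has no $\ZZ$--torsion, and a rank count over $\QQ$ gives $n$), and it carries a Blanchfield linking pairing $b_{V}$. Trotter's classification identifies $S$--equivalence classes of nonsingular $n\times n$ Seifert matrices with isometry classes of the pairs $(A_{V},b_{V})$, so $V\sim W$ produces an isometry $(A_{V},b_{V})\xrightarrow{\ \sim\ }(A_{W},b_{W})$. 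The crux is that a nonsingular Seifert matrix records strictly more than its Blanchfield pairing: up to congruence, $V$ is the same data as $(A_{V},b_{V})$ equipped with a ``Seifert refinement'' (a compatible integral self-dual structure realizing $b_{V}$), and the congruence classes of nonsingular Seifert matrices inducing a fixed $(A,b)$ form a torsor under a finite group $\mathcal U(A,b)$ assembled from $\mathrm{Aut}_{\ZZ[t,t^{-1}]}(A)$ modulo the automorphisms realized by isometries of $b$. This $\mathcal U(A,b)$ is exactly the obstruction to upgrading the isometry just produced to an integral congruence $PVP^{T}=W$.

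\emph{Killing the obstruction using the determinant hypothesis.} It remains to show $\mathcal U(A_{V},b_{V})=0$ when $\abs{\det V}$ is $1$ or a prime $p$. If $\det V=\pm1$, then $tV-V^{T}=V\,(tI-V^{-1}V^{T})$ with $V^{-1}V^{T}$ an \emph{integral} matrix, so $A_{V}\cong\ZZ^{n}$ with $t$ acting by $V^{-1}V^{T}$; here $V$ is itself a unimodular bilinear form on $\ZZ^{n}$, so the Seifert refinement of $(A_{V},b_{V})$ is forced and $\mathcal U=0$. If $\abs{\det V}=p$, then $p$ is the leading coefficient of the order polynomial of $A_{V}$, and $\mathcal U(A_{V},b_{V})$ is governed by the arithmetic of $\ZZ[t,t^{-1}]/(\Delta_{K})$ at $p$, which modulo $p$ splits into a product of finite fields; a short norm computation shows the pertinent units are all norms, so again $\mathcal U=0$. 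Either way the isometry $(A_{V},b_{V})\cong(A_{W},b_{W})$ is induced by an integral congruence, which is the claim. The step I expect to be the main obstacle is isolating the precise ``Seifert refinement'' that distinguishes congruence from $S$--equivalence and verifying that the associated finite group $\mathcal U$ collapses under a prime or unit determinant; this is the substance of \cite[\S4]{trotter}, whereas the reduction to equal size and the translation into Blanchfield pairings are essentially formal.
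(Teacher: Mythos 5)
The paper gives no proof of this statement: it is imported from Trotter \cite{trotter} as his Corollary~4.7 and used as a black box in Section~\ref{sec:lowcrossings}. Your proposal is essentially an annotated version of that same citation, and its architecture --- reduce to nonsingular matrices of equal size via the breadth of $\det(V-tV^T)$, classify $S$--equivalence of nonsingular matrices by the induced pairing on the Alexander module, then show that the obstruction separating integral congruence from $S$--equivalence vanishes when $\abs{\det V}$ is $1$ or prime --- is consistent with how Trotter argues. But as a standalone proof it has a genuine gap exactly where the hypothesis on $\det V$ enters. The existence and structure of the finite group $\mathcal{U}(A,b)$ measuring congruence classes within an $S$--equivalence class is asserted rather than constructed, and the decisive claim (``a short norm computation shows the pertinent units are all norms'') is not carried out; nothing in the sketch explains why primality, as opposed to, say, squarefreeness, is what makes the obstruction collapse. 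That this step carries real content is visible inside the paper itself: Proposition~\ref{prop:sequivalent} combined with Proposition~\ref{prop:general_congruences} produces $2\times 2$ Seifert matrices of composite determinant $-b(b+1)$ that are $S$--equivalent but not integrally congruent, so the primality hypothesis must be used in an essential, quantitative way, and that is precisely the point your argument waves at.

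A secondary soft spot: the reduction step claims every Seifert matrix is $S$--equivalent to a nonsingular one by ``applying inverse expansions until no longer possible,'' but a matrix admitting no inverse expansion is not obviously nonsingular; this is itself a (standard but nontrivial) lemma of Trotter. For the only use of the theorem in this paper (Theorem~\ref{12crossings}, where $V=\left(\begin{smallmatrix} -1 & 2 \\ 1 & 0\end{smallmatrix}\right)$ has determinant $-2$ and one compares the explicit $2\times 2$ matrices of Corollary~\ref{sequivalent}), both of these reductions are indeed vacuous, as you note --- but then the entire weight of the proof rests on the two steps you defer to \cite{trotter}.
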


\begin{theorem}\label{12crossings}Let $K$ be a genus one knot that has a diagram with at most 12 crossings.
Then $K$ admits no cosmetic crossings.
\end{theorem}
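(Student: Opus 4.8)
The plan is to combine the obstructions proved earlier in the paper with Trotter's congruence theorem and a case analysis over the short list of genus one knots with at most twelve crossings. First I would invoke Theorem~\ref{general}: if such a knot $K$ admits a cosmetic crossing, then $K$ is algebraically slice and $H_1(Y_K)$ is finite cyclic. The algebraic sliceness condition, together with $\Delta_K(-1) = \det(K)$ being a perfect square (Corollary~\ref{determinant}), already eliminates most of the $23$ genus one knots with up to $12$ crossings: one simply reads off the Alexander polynomial and the determinant from a knot table (e.g.\ KnotInfo) and discards every knot whose determinant is not a square or whose Alexander polynomial does not factor as $f(t)f(t^{-1})$ with $f$ linear. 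Similarly one discards any knot for which $H_1(Y_K)$ is known to be non-cyclic; since $H_1(Y_K)$ has a presentation matrix $V+V^T$ with $V$ a genus one Seifert matrix, this is again a finite check against tabulated data.

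For the finitely many survivors, the strategy is to use Corollary~\ref{sequivalent}: such a $K$ must have a Seifert matrix $V = \bp a & b \\ b+1 & 0\ep$ which is $S$--equivalent to $V' = \bp a+\epsilon & b \\ b+1 & 0\ep$ for some $\epsilon \in \{-1,1\}$. Now I would compute $|\det(V)| = |b(b+1) - b(b+1)|$—wait, more precisely $\det(V) = -b(b+1)$, so $|\det(V)| = b(b+1)$, but one must be careful; in fact $\det(V) = 0\cdot a - b(b+1) = -b(b+1)$, and $|\det(V+V^T)| = |\det(K)|$ is the relevant quantity, while Trotter's theorem requires $|\det(V)|$ itself—here $\det(V) = -b(b+1)$. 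For the surviving tabulated knots one checks whether $|\det(V)|$ is $1$ or prime. When it is, Theorem~\ref{thm:trotter_congruence} upgrades the $S$--equivalence $V \sim V'$ to an honest integral congruence $V \approx V'$, and then Proposition~\ref{prop:general_congruences} forces $\epsilon = a + n(2b+1) - a$ to be a nonzero multiple of $2b+1$, which is impossible since $\epsilon = \pm 1$ and $|2b+1| \geq 3$ once $b \neq 0, -1$ (the case $b = 0$ or $b=-1$ gives $\Delta_K \doteq 1$, excluded among the tabulated nontrivial-Alexander-polynomial knots). Hence no cosmetic crossing exists for those knots.

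The main obstacle, and the step requiring the most care, is confirming that every one of the $23$ genus one knots with at most $12$ crossings is in fact eliminated by one of these three mechanisms: (i) determinant not a perfect square or Alexander polynomial not of metabolizing form; (ii) $H_1(Y_K)$ non-cyclic; or (iii) determinant prime (so Trotter plus Proposition~\ref{prop:general_congruences} applies). This is essentially a bookkeeping task over knot tables, but it is the crux: I would organize it as a single table listing each knot, its genus, its Alexander polynomial, its determinant, the structure of $H_1(Y_K)$, and which criterion rules it out. The slightly delicate point is that Trotter's theorem applies to $V$ with $|\det V|$ prime or $1$, and $\det V = -b(b+1)$; for the metabolizing matrix form this coincides (up to sign) with $\det(V - tV^T)|_{t=1}$ considerations, but one should double check that for each relevant knot the minimal genus Seifert form really does present in the normalized shape of \eqref{Eqn:matrix1} with the appropriate $b$, reading $b(b+1) = -\Delta_K(1)$-type data off the polynomial, and that primality of $|b(b+1)|$ (equivalently $|b| = 1$, i.e.\ $b(b+1) = 2$) is what we need—so in practice the surviving cases have $b = 1$ or $b = -2$, determinant $\det(K) = |\Delta_K(-1)|$ a square, and one finishes by the congruence argument. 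I expect the upshot is that every genus one knot through $12$ crossings falls to (i), (ii), or the $b = \pm 1, \mp 2$ instance of (iii), completing the proof.
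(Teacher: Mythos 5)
Your overall architecture matches the paper's: the square--determinant test (Corollary \ref{determinant}) eliminates $19$ of the $23$ tabulated knots, non-cyclicity of $H_1(Y_K)$ handles $9_{46}\cong P(3,3,-3)$, and the combination of Corollary \ref{sequivalent}, Trotter's Theorem \ref{thm:trotter_congruence} and Proposition \ref{prop:general_congruences} handles $11\textrm{n}_{139}$ (and would also handle $6_1$, since there $b(b+1)=2$). You also correctly flag the key subtlety that Trotter needs $|\det V|=|b(b+1)|$ prime, which forces $b\in\{1,-2\}$.

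However, there is a genuine gap: the knot $10_3$ survives all three of your mechanisms, and your closing ``expectation'' that every survivor has $b=\pm 1$ is false precisely there. Indeed $\Delta_{10_3}(t)\doteq 6t^2-13t+6\doteq(3-2t)(3-2t^{-1})$, so the knot is algebraically slice with $\det(10_3)=25$ a perfect square, ruling out mechanism (i); being $2$--bridge, its double branched cover is a lens space with $H_1(Y_{10_3})\cong\ZZ_{25}$ cyclic, ruling out (ii); and the normalized Seifert matrix has $b(b+1)=6$, so $|\det V|=6$ is neither $1$ nor prime and Theorem \ref{thm:trotter_congruence} does not apply, ruling out (iii). (Corollary \ref{matrix} is also unavailable, since cyclicity of $H_1(Y_{10_3})$ forces $\gcd(2a,2b+1)=1$.) The paper closes this hole by an entirely different means: $6_1$ and $10_3$ are $2$--bridge knots, so Torisu's theorem \cite{torisu} excludes cosmetic crossings for them directly. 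Without importing that (or some other) external result, your case analysis does not terminate.
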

\begin{proof} Table 1,  obtained from KnotInfo \cite{knotinfo}, gives  the 23 knots of genus one with at most 12 crossings,
with the values of their determinants.
We observe that there  are four knots with square determinant. These are $6_1$,  $9_{46},  10_3$ and $11\textrm{n}_{139}$
which are all known to be algebraically slice.
Thus Corollary  \ref{determinant} excludes
cosmetic crossings for all but these four knots.
Now  $6_1$ and $10_3$ are 2-bridge knots; by \cite{torisu} they do not admit cosmetic crossings.
The knot
$K=9_{46}$  is isotopic to the pretzel knot $P(3,3,-3)$ of Figure \ref{pretzelknots} which
has Seifert matrix $ \begin{pmatrix}3 & 2\\ 1 & 0\end{pmatrix}$ since
 the pretzel knot $P(p,q,r)$ has a Seifert matrix given by $\frac{1}{2}\begin{pmatrix}{p+q} & {q+1} \\ {q-1} & q+r\end{pmatrix}$; see \cite[Example 6.9]{lickorish:book}.  The homology $H_1(Y_K)$ is represented by $ \begin{pmatrix}6 & 3\\ 3 & 0\end{pmatrix}$
(compare Corollary
\ref{pretzels} below). Thus by Lemma \ref{abelian}, $H_1(Y_K)\cong {\ZZ}_3\oplus{\ZZ}_3$, and by Theorem \ref{doublecover},
$K$
 cannot have cosmetic crossings.
 \begin{table}[height=1.00in]
\begin{center}
\begin{tabular}{|c|c|c|c|c|c|}
\hline
$K$& $\det (K)$ & $K$ & $\det (K)$& $K$ & $\det (K)$  \\
\hline
$3_1$ &3& $9_2$ &15& ${11\textrm{a}}_{362}$ & 39 \\
\hline
$4_1$ & 5& $9_5$ & 23&${11\textrm{a}}_{363}$&35 \\
\hline
$5_2$ &7&$9_{35}$ &27&${\bf {11\textrm{\bf n}}_{139}}$ &{\bf 9}\\
\hline
${\bf 6_1}$ &{\bf 9}&${ \bf 9_{46}}$ &{\bf 9}&${11\textrm{n}}_{141}$& 21\\
\hline
$7_2 $& 11&$10_1$&17&${12\textrm{a}}_{803}$ & 21\\
\hline
$7_4$&15&${\bf 10_3}$ & {\bf 25}&${12\textrm{a}}_{1287}$ & 37\\
\hline
$8_1$ &13&${11\textrm{a}}_{247}$ &19&${12\textrm{a}}_{1166}$& 33\\
\hline
$8_3$&17&${11\textrm{a}}_{343}$ & 31&-&- \\
\hline
\end{tabular}
\label{twelve}
\end{center}
\vskip.13in
\caption{Genus one knots with at most 12 crossings.}
\end{table}

The only remaining knot from Table  1 is  the knot $K=11\textrm{n}_{139}$.  This knot is isotopic to the pretzel knot $P(-5,3,-3)$.
There is therefore a genus one surface
 for which
a Seifert matrix is
\[V = \left(\ba{cc} -1 & 2 \\ 1 & 0 \ea \right),\]
again by \cite[Example 6.9]{lickorish:book}.
Using this Seifert matrix we calculate
 $H_1(Y_K)$ $\cong {\ZZ}_9$.
 Thus Theorem \ref{general} does not work for  the knot $11\textrm{n}_{139}$.
Next we turn to Corollary \ref{sequivalent}.
 Since $|\det(V)| = 2$ is prime, by Theorem \ref{thm:trotter_congruence} it suffices to show that $V$ is neither integrally congruent
to
\[\left(\ba{cc} 0 & 2 \\ 1 & 0 \ea \right)\mbox{ nor to } \left(\ba{cc} -2 & 2 \\ 1 & 0 \ea \right).\]
But this follows from Proposition \ref{prop:general_congruences}, with $a=-1$ and $b=1$, noting that two matrices are congruent if and only if their transposes are.
\end{proof}

\begin{remark}
 The method applied for $11\textrm{n}_{139}$ in the proof of
 Theorem \ref{12crossings}
  can also be used to show that the knots $6_1$ and $9_{46}$ do not admit cosmetic crossings.
\end{remark}

 \section{More examples} \label{sec:examples}
 In this section we discuss some families of examples for which  Theorems \ref{general} and \ref{Thm:unique_minimal_genus}
 imply the nugatory crossing conjecture.

\subsection{Twisted Whitehead doubles} Given a knot $K$ let $D_{+}(K, n)$ denote the $n$-twisted
Whitehead double of $K$ with a positive clasp and let  $D_{-}(K, n)$ denote the $n$-twisted
Whitehead double of $K$ with a negative clasp.

\begin{corollary}\label{whitehead}\  (a)\
Given a knot $K$,  the Whitehead double $D_{+}(K, n)$ admits no cosmetic crossing if either $n<0$ or $\abs{n}$ is odd.  Similarly  $D_{-}(K, n)$ admits no cosmetic crossing if either $n>0$ or $\abs{n}$ is odd.
\vskip 0.03in

(b)\  If $K$ is not a cable knot then $D_{\pm}(K, n)$ admits  no cosmetic crossings for every $n\neq 0$.
\end{corollary}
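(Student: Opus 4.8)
The plan is to identify $D_{\pm}(K,n)$ as a genus one knot and then feed its classical Seifert invariants into Corollary~\ref{determinant} for part~(a) and into Theorem~\ref{Thm:unique_minimal_genus} for part~(b). The Whitehead pattern equips $D_{\pm}(K,n)$ with an evident once-punctured torus Seifert surface $S$; taking the symplectic basis of $H_1(S)$ consisting of a curve $a_1$ meeting the clasp once and a curve $a_2$ running once along $K$ with the $n$-framing, the Seifert matrix is
\[
V_{\pm}=\begin{pmatrix}\mp 1 & 1 \\ 0 & n\end{pmatrix},
\]
the knotting of $K$ being invisible because ${\rm lk}(a_2,a_2^{+})$ records only the framing. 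Hence
\[
\Delta_{D_{\pm}(K,n)}(t)\doteq nt^2-(2n\pm 1)t+n,\qquad \det\big(D_{\pm}(K,n)\big)=\abs{4n\pm 1}.
\]
For $n\neq 0$ the polynomial $\Delta_{D_{\pm}(K,n)}(t)$ has two non-zero terms, so it is not $\doteq 1$; in particular $D_{\pm}(K,n)$ is then non-trivial and, since it bounds $S$, has genus exactly one, with $S$ a minimal genus Seifert surface.

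For part~(a) I would invoke Corollary~\ref{determinant}, which reduces the claim to an elementary statement about when $\abs{4n\pm 1}$ is a perfect square. Writing $4n+1=-m^2$ or $4n-1=m^2$ forces $m^2\equiv 3\pmod 4$, which is impossible; writing $4n+1=m^2$ forces $m$ odd and hence $n=k(k+1)$ for some $k\in\ZZ$, a number which is even and $\geq 0$; and writing $1-4n=m^2$ forces $n=-k(k+1)$, which is even and $\leq 0$. Therefore $\abs{4n+1}$ is not a perfect square whenever $n<0$ or $\abs{n}$ is odd, and $\abs{4n-1}$ is not a perfect square whenever $n>0$ or $\abs{n}$ is odd, so Corollary~\ref{determinant} forbids cosmetic crossings for these $n$.

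For part~(b) I would appeal to the results of Lyon and Whitten \cite{Lyons, whitten}: when $K$ is not a cable knot, $S$ is, up to isotopy, the \emph{unique} minimal genus Seifert surface of $D_{\pm}(K,n)$. Then Theorem~\ref{Thm:unique_minimal_genus} would force $\Delta_{D_{\pm}(K,n)}(t)\doteq 1$ if $D_{\pm}(K,n)$ admitted a cosmetic crossing, contradicting the computation above whenever $n\neq 0$; hence $D_{\pm}(K,n)$ has no cosmetic crossings for any $n\neq 0$.

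The number theory in part~(a) and the non-triviality of $\Delta$ in part~(b) are routine. The step requiring the most care is the topological input at the start: one must check that $S$ genuinely minimizes genus and pin down $V_{\pm}$ with the correct clasp sign so that the two determinants come out as $\abs{4n+1}$ and $\abs{4n-1}$ and are matched to $D_{+}$ and $D_{-}$ as in the statement. The only substantive ingredient of part~(b) is then the precise form of the Lyon--Whitten uniqueness theorem --- that ``$K$ not a cable'' is exactly the hypothesis guaranteeing a unique minimal genus Seifert surface for $D_{\pm}(K,n)$, valid for every $n\neq 0$ (the unknot included); once that is available, the rest is bookkeeping.
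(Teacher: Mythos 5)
Your proof is correct, and part (a) takes a genuinely different (and more unified) route than the paper's. The paper splits part (a) into two cases: for $\abs{n}$ odd it compares the leading coefficient of $\Delta_n$ with the general form $b(b+1)(t^2+1)-\bigl(b^2+(b+1)^2\bigr)t$ forced on algebraically slice genus one knots, concluding $\abs{n}=\abs{b(b+1)}$ must be even; for $n<0$ it observes that $D_{+}(K,n)$ is $S$-equivalent to the positive-clasped double of the unknot, which is a positive knot with non-zero signature and hence not algebraically slice. You instead run everything through Corollary \ref{determinant}: computing $\det\bigl(D_{\pm}(K,n)\bigr)=\abs{4n\pm 1}$ and checking that $4n+1$ is a perfect square only when $n=k(k+1)$ (even and non-negative) while $4n+1=-m^2$ is impossible mod $4$ (and symmetrically for $4n-1$). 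This single determinant computation handles both the sign condition and the parity condition at once, avoids the signature/positivity input entirely, and in fact rules out slightly more values of $n$ than the statement requires; both arguments ultimately rest on Theorem \ref{aslice}. Your Seifert matrix $\begin{pmatrix}\mp 1 & 1\\ 0 & n\end{pmatrix}$ differs from the paper's $\begin{pmatrix}-1 & 0\\ -1 & n\end{pmatrix}$ only by choice of basis/convention and yields the same Alexander polynomial up to units, and your sanity checks (trefoil and figure-eight values confirm the clasp-sign matching) and the verification that $S$ is genus-minimizing for $n\neq 0$ are all in order. Part (b) is essentially identical to the paper: Lyon--Whitten uniqueness for non-cable companions, then Theorem \ref{Thm:unique_minimal_genus} against $\Delta_n\neq 1$.
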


\begin{proof}  (a)\ A Seifert surface of $D_{+}(K, n)$ obtained by plumbing an $n$-twisted annulus
with core $K$ and a Hopf band gives rise to a  Seifert matrix
$V_n = \begin{pmatrix}-1& 0 \\ -1 & n\end{pmatrix}$  \cite[Example 6.8]{lickorish:book}.
Thus the Alexander polynomial
is
of the form
 \begin{equation}\label{Eqn:matrix5}\Delta_K(t)\doteq \Delta_{K'}(t)\doteq -n(t^2+1)+(1+2n)t=:\Delta_n.\end{equation}
Suppose now that
$D_{+}(K, n)$ admits a cosmetic crossing.  Then
$\Delta_n$ should be of the form shown in equation (\ref{Eqn:matrix2}).
Comparing the leading coefficients in the expressions (\ref{Eqn:matrix2}) and (\ref{Eqn:matrix5})
we obtain $\abs{n}=\abs{b(b+1)}$ which implies that $\abs{n}$ should be even.
We have shown that if $\abs{n}$ is odd then $D_{+}(K, n)$ admits no
cosmetic crossing changes.
Suppose now that $n<0$. Since the Seifert matrix $V_n$
depends only on $n$ and not on $K$,  the knot $D_{+}(K, n)$ is $S$-equivalent
to the $n$-twisted, positive-clasped  double of the unknot. This is a positive knot
(all the crossings in the standard diagram of $D_{+}(O, n)$ are positive)
and it has non-zero signature \cite{positive}. Hence $D_{+}(K, n)$
is not algebraically slice and by Theorem \ref{aslice} it cannot admit
cosmetic crossings.

A similar argument holds for $D_{-}(K, n)$.

\vskip 0.05in

(b)\   Suppose that $K$ is not a cable knot. By results of Lyon and Whitten \cite{Lyons, whitten}, for every $n \neq 0$
the Whitehead doubles $D_{\pm}(K, n)$ have unique Seifert surfaces of minimal genus.
By (\ref{Eqn:matrix5}), $\Delta_n\neq 1$, and the conclusion follows by Theorem \ref{Thm:unique_minimal_genus}.
\end{proof}
\begin{figure}
  \includegraphics[width=1.5in]{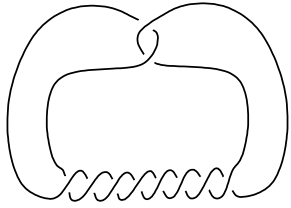}
  \caption{The  $(-4)$-twisted  negative-clasped double of the unknot, $D_{-}(O, -4)$.}
  \label{fig:twistknot}
\end{figure}

\begin{figure}
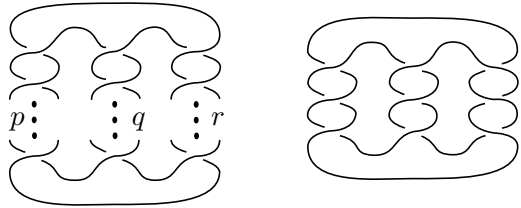
\caption{$P(p,q,r)$ with $p,q$ and $r$ positive and $ P(3,3,-3)$.}
\label{pretzelknots}
\end{figure}

\subsection{Pretzel knots}
Let $K$ be a three string  pretzel knot $P(p,q,r)$ with $p,q$ and $r$  odd (see Figure \ref{pretzelknots}).  The knot  determinant is given
by $\det (K) = |pq+qr+pr|$ and if $K$ is non-trivial then it has genus one.
It is known that  $K$ is algebraically  slice  if and only if $pq+qr+pr=-m^2$, for some odd  $m\in {\ZZ}$ \cite{levine}.

\begin{corollary} \label{pretzels}The knot  $P(p,q,r)$ with $p,q$ and $r$ odd does not admit cosmetic crossings if one of the following is true:
\vskip 0.05in

(a)\  $pq+qr+pr\neq -m^2$, for every odd $m\in {\ZZ}$.
\vskip 0.05in

(b)\ $q+r=0$ and $\textrm{gcd}(p,\  q)\neq 1$.
\vskip 0.05in

(c)\  $p+q=0$ and $\textrm{gcd}(q,\  r)\neq 1$.

\end{corollary}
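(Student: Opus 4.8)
The plan is to treat cases (a), (b), (c) separately, in each case reducing to a result already established above by means of the known Seifert matrix $V=\tfrac12\begin{pmatrix}p+q&q+1\\q-1&q+r\end{pmatrix}$ of $K=P(p,q,r)$ (see \cite[Example 6.9]{lickorish:book}); this is an integral matrix because $p,q,r$ are odd, and it comes from the genus one pretzel surface. I will also use Levine's criterion \cite{levine} that $K$ is algebraically slice exactly when $pq+qr+pr=-m^2$ for some odd $m\in\ZZ$.

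Part (a) is immediate: its hypothesis says precisely that $K$ is not algebraically slice, and in particular $K$ is not the unknot (which is algebraically slice), so $K$ is a nontrivial, hence genus one, knot. Theorem \ref{aslice} then rules out cosmetic crossings.

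For parts (b) and (c) I would substitute the relevant relation into $V$ and bring the result into the normal form $(\ref{Eqn:matrix1})$. If $q+r=0$, then $V=\tfrac12\begin{pmatrix}p+q&q+1\\q-1&0\end{pmatrix}$; the vanishing $(2,2)$-entry means the second basis curve on the pretzel surface has zero self-linking, so this basis of its first homology contains a metabolizer, and after reversing the orientation of the first basis curve (an integral congruence that fixes the metabolizer) one obtains a Seifert matrix of the form $\begin{pmatrix}a&b\\b+1&0\end{pmatrix}$ with $a=\tfrac{p+q}{2}$ and $2b+1=\pm q$. Then $\gcd(2a,2b+1)=\gcd(p+q,q)=\gcd(p,q)\neq1$ by hypothesis, and in particular $q\neq\pm1$, so $b\neq0,-1$; Corollary \ref{matrix} now yields the conclusion. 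Case (c), $p+q=0$, is handled identically after first conjugating $V=\tfrac12\begin{pmatrix}0&q+1\\q-1&q+r\end{pmatrix}$ by $\begin{pmatrix}0&1\\1&0\end{pmatrix}$ (i.e.\ reordering the basis) to move the zero to the lower-right corner, producing $\begin{pmatrix}a&b\\b+1&0\end{pmatrix}$ with $a=\tfrac{q+r}{2}$ and $2b+1=q$, so that $\gcd(2a,2b+1)=\gcd(q+r,q)=\gcd(q,r)\neq1$ and again $b\neq0,-1$, and Corollary \ref{matrix} finishes it.

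I do not anticipate a genuine obstacle: all the content is carried by Theorems \ref{aslice} and \ref{doublecover} (the latter through Corollary \ref{matrix}) together with the explicit pretzel Seifert matrix, and the only point needing care is the sign bookkeeping required to land exactly in the form $(\ref{Eqn:matrix1})$, which the congruences above handle. As an alternative to Corollary \ref{matrix} in cases (b) and (c), one can argue directly from Theorem \ref{doublecover}: the presentation matrix $V+V^{T}$ of $H_1(Y_K)$ equals $\begin{pmatrix}p+q&q\\q&0\end{pmatrix}$ (resp.\ $\begin{pmatrix}0&q\\q&q+r\end{pmatrix}$), so Lemma \ref{abelian} with $d=\gcd(p,q)$ (resp.\ $d=\gcd(q,r)$) gives $H_1(Y_K)\cong\ZZ_d\oplus\ZZ_{q^2/d}$, which is non-cyclic because $d\mid q$ and $d\neq1$.
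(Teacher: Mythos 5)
Your proof is correct and follows essentially the same route as the paper: part (a) via Theorem \ref{aslice} together with Levine's algebraic sliceness criterion, and parts (b), (c) via the pretzel Seifert matrix from \cite[Example 6.9]{lickorish:book} and Corollary \ref{matrix}. You merely make explicit the basis-change bookkeeping needed to land in the normal form $(\ref{Eqn:matrix1})$ and the check that $b\neq 0,-1$, which the paper's terser proof leaves implicit, and your alternative via $V+V^{T}$ and Lemma \ref{abelian} is just Corollary \ref{matrix} unwound.
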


\begin{proof} In case (a) the result follows from Theorem \ref{aslice} and the discussion above.
For case (b) recall that there is a genus one surface for $P(p,q,r)$ for which
a Seifert matrix is $V_{(p,q,r)} = \frac{1}{2}\begin{pmatrix}{p+q} & {q+1} \\ {q-1} & q+r\end{pmatrix}$ \cite[Example 6.9]{lickorish:book}.
Suppose that   $q+r=0$. If  $\textrm{gcd}(p,\  q)\neq 1$,
then  $\textrm{gcd}(p+q,\  q)\neq 1$ and the conclusion in case (b) follows by
Corollary \ref{matrix}. Case (c) is similar.
\end{proof}

\bibliographystyle{annotate}
\bibliography{biblio}

\end{document}